\documentclass[oneside, 11pt, article, draft]{memoir}


\usepackage{amssymb}
\usepackage{amsmath}
\usepackage{amsfonts}
\usepackage{amsthm}
\usepackage{thmtools}

\usepackage[english]{babel}

\usepackage[final]{graphicx}


\usepackage[plain, english]{fancyref}
\usepackage{hyperref}
\hypersetup{
     colorlinks   = true,
     citecolor    = blue,
     linkcolor    = blue,
     final
}

\usepackage{bbm}
\usepackage{mathrsfs}
\usepackage{stmaryrd}

\usepackage[all, line]{xy}

\usepackage{ifthen}

\usepackage{xspace}

\usepackage{calc}
\usepackage{color}




\def\clap#1{\hbox to 0pt{\hss#1\hss}}

\makeatletter
\def\blfootnote{\xdef\@thefnmark{}\@footnotetext}
\makeatother


\newcommand{\ig}{\includegraphics}

\newcommand{\mathset}[1]{\mathbbm{#1}}

\newcommand{\setZ}{\mathset{Z}}

\newcommand{\setR}{\mathset{R}}
\newcommand{\setC}{\mathset{C}}

\newcommand{\idone}{\mathbbm{1}}

\newcommand{\bwedge}{\raise1pt\hbox{\ensuremath{\bigwedge}}}

\newcommand{\abs}[2][]{#1\lvert #2#1\rvert}

\newcommand{\frakg}{\mathfrak{g}}
\newcommand{\frakh}{\mathfrak{h}}

\newcommand{\frakp}{\mathfrak{p}}

\renewcommand{\phi}{\varphi}
\renewcommand{\epsilon}{\varepsilon}
\renewcommand{\otimes}{\varotimes}

\newcommand{\sotimes}{\mathbin{\!\!\raise1.5pt\hbox{
      $\scriptscriptstyle\otimes$}}}

\newcommand{\rb}{\raisebox}
\newcommand{\rrlen}{-3.75mm}
\newcommand{\rwlen}{9mm}

\newcommand{\ssf}{{\raise0.4mm\hbox{\ensuremath{\scriptscriptstyle \:\!\phi}}}} 
\newcommand{\knots}{\mathcal{K}} 
\newcommand{\fknots}{\mathcal{K}^\ssf} 

\newcommand{\vassinv}{\mathcal{V}} 
\newcommand{\fvassinv}{\mathcal{V}^\ssf} 
\newcommand{\vassinvc}{\mathcal{\widehat V}} 
\newcommand{\fvassinvc}{\mathcal{\widehat V}^\ssf} 
\newcommand{\weights}{\mathcal{W}} 
\newcommand{\fweights}{\mathcal{W}^\ssf} 
\newcommand{\fweightsc}{\mathcal{\widehat W}^\ssf} 

\newcommand{\cd}{A} 
\newcommand{\cda}{\mathcal{A}} 
\newcommand{\fcda}{\mathcal{A}^\ssf} 
\newcommand{\fcdac}{\mathcal{\widehat A}^\ssf} 

\newcommand{\qinv}{Q} 

\newcommand{\sleq}{\raise0.2mm\hbox{\ensuremath{\scale{0.38}{\leqslant}}}}

\newcommand{\ssm}{{\scalebox{0.9}[1.0]{$\:\!\scriptstyle - \!\!\;$}}}

\newcommand{\so}{\mathfrak{so}}
\renewcommand{\sl}{\mathfrak{sl}}


\DeclareMathOperator{\spn}{span}

\DeclareMathOperator{\Id}{Id}

\DeclareMathOperator{\Tr}{Tr}
\DeclareMathOperator{\Hom}{Hom}     
\DeclareMathOperator{\End}{End}
\DeclareMathOperator{\Aut}{Aut}


\theoremstyle{plain}
\newtheorem{theorem}{Theorem}[chapter]

\newtheorem{proposition}[theorem]{Proposition}

\theoremstyle{definition}
\newtheorem{remark}[theorem]{Remark}
\newtheorem{definition}[theorem]{Definition}

\declaretheorem[style=definition,qed=$\diamond$,
sibling=definition]{example}  




\makeatletter       

\let\@xp\expandafter 
\newcommand\DefineFancyrefPrefix[2]{%
  \@namedef{fancyref#1labelprefix}{#1}%
  \@namedef{Fref#1name}{#2}%
  \@namedef{fref#1name}{\MakeLowerCase{\@nameuse{Fref#1name}}}%
  \def\@style{vario}%
  \@xp\@xp\@xp\frefformat\@xp\@xp\@xp\@style\@xp\csname
  fancyref#1labelprefix\endcsname
  {%
    \@nameuse{fref#1name}\fancyrefdefaultspacing##1##3%
  }%
  \@xp\@xp\@xp\Frefformat\@xp\@xp\@xp\@style\@xp\csname
  fancyref#1labelprefix\endcsname
  {%
    \@nameuse{Fref#1name}\fancyrefdefaultspacing##1##3%
  }%
  \def\@style{plain}%
  \@xp\@xp\@xp\frefformat\@xp\@xp\@xp\@style\@xp\csname
  fancyref#1labelprefix\endcsname
  {%
    \@nameuse{fref#1name}\fancyrefdefaultspacing##1%
  }%
  \@xp\@xp\@xp\Frefformat\@xp\@xp\@xp\@style\@xp\csname
  fancyref#1labelprefix\endcsname
  {%
    \@nameuse{Fref#1name}\fancyrefdefaultspacing##1%
  }%
}
\makeatother


\DefineFancyrefPrefix{ax}{Axiom}
\DefineFancyrefPrefix{cor}{Corollary}
\DefineFancyrefPrefix{lem}{Lemma}
\DefineFancyrefPrefix{prop}{Proposition}
\DefineFancyrefPrefix{thm}{Theorem}
\DefineFancyrefPrefix{def}{Definition}
\DefineFancyrefPrefix{rem}{Remark}
\DefineFancyrefPrefix{exa}{Example}
\DefineFancyrefPrefix{cha}{Chapter}




\makepagestyle{niels}

\makeoddfoot{niels}{}{\thepage}{}
\setlrmarginsandblock{3cm}{3cm}{*} 
\setulmarginsandblock{3.5cm}{3.5cm}{*} 
\checkandfixthelayout[nearest]

\chapterstyle{article}
\pagestyle{niels}

\AtEndDocument{\bigskip{\footnotesize
    
    \hfill
    \begin{minipage}{\dimexpr\textwidth-1cm}
      \vspace{0.5cm}
      (I.~Biswas) \textsc{School of Mathematics, Tata Institute of
        Fundamental Research,\\ Homi Bhabha Road, Bombay 400005,
        India}\par
      \textit{E-mail address: }\texttt{indranil@math.tifr.res.in}\par
      \addvspace{\medskipamount}

      (N.L.~Gammelgaard) \textsc{Centre for Quantum Geometry of Moduli
        Spaces (QGM), \\Aarhus University, Ny Munkegade 118, bldg. 1530,
        DK-8000 Aarhus C, Denmark}\par
      \textit{E-mail address: }\texttt{nlg@qgm.au.dk}\par
      \addvspace{\medskipamount}

      \xdef\tpd{\the\prevdepth}
    \end{minipage}

}}

\begin{document}

\title{Vassiliev Invariants from Symmetric Spaces}

\author{Indranil Biswas and Niels Leth Gammelgaard}

\blfootnote{This work was supported by a Marie Curie International
  Research Staff Exchange Scheme Fellowship within the 7th European
  Union Framework Programme (FP7/2007-2013) under grant agreement
  no. 612534, project MODULI - Indo European Collaboration. The second
  author was partly supported by the center of excellence grant
  `Center for Quantum Geometry of Moduli Spaces' from the Danish
  National Research Foundation (DNRF95). The first author is supported
  by the J. C. Bose Fellowship.  }

\date{\vspace{-5ex}}

\maketitle

\begin{abstract}
  We construct a natural framed weight system on chord diagrams from
  the curvature tensor of any pseudo-Riemannian symmetric space. These weight
  systems are of Lie algebra type and realized by the action of the
  holonomy Lie algebra on a tangent space. Among the Lie
  algebra weight systems, they are exactly characterized by having the
  symmetries of the Riemann curvature tensor.
\end{abstract}


\chapter{Introduction}

The essence of this paper is the simple observation that the curvature
tensor of a pseudo-Riemannian symmetric space satisfies an identity analogous
to the 4T relation in the theory of Vassiliev invariants. This means
that the curvature tensor gives rise to a weight system on chord
diagrams in the most natural way, and by virtue of the Kontsevich
integral, this weight system integrates to a finite-type invariant.

Suppose that $(M,g)$ is a connected pseudo-Riemannian manifold, with
Levi-Civita connection $\nabla$ and curvature $R \in C^\infty(M, T^*\!M
\otimes T^*\!M \otimes \End(TM))$. Using the metric to identify $T^*\!M$ with
$TM$, we get a tensor $\hat R \in C^\infty(M, \End(TM) \otimes \End(TM))$. This
can be used to construct a function $w_R$ on chord diagrams, by
placing $\hat R$ on each chord and contracting around the circle, as
done in the following example
\begin{align*}
  w_R (\rb{\rrlen}{\ig[width=\rwlen*105/100]{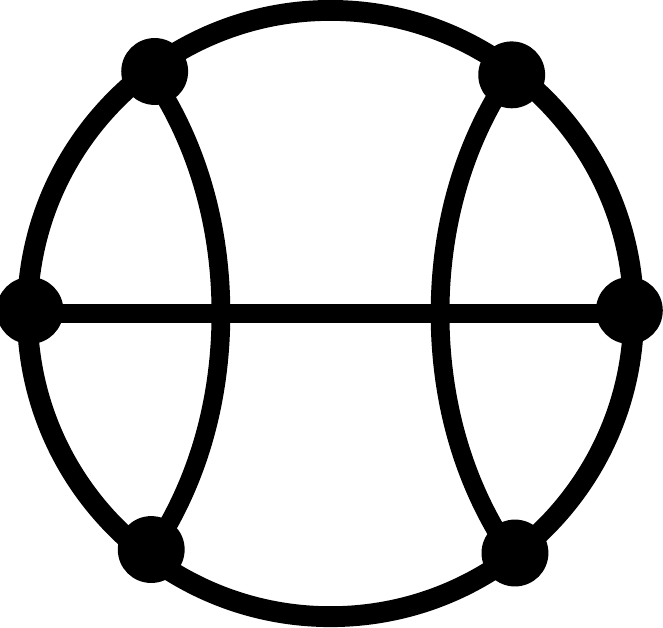}}) 
  \quad  = \quad
  \rb{-9.9mm}{
    \begin{picture}(52,62)(0,0)
      \put(0,0){\ig[height=22mm]{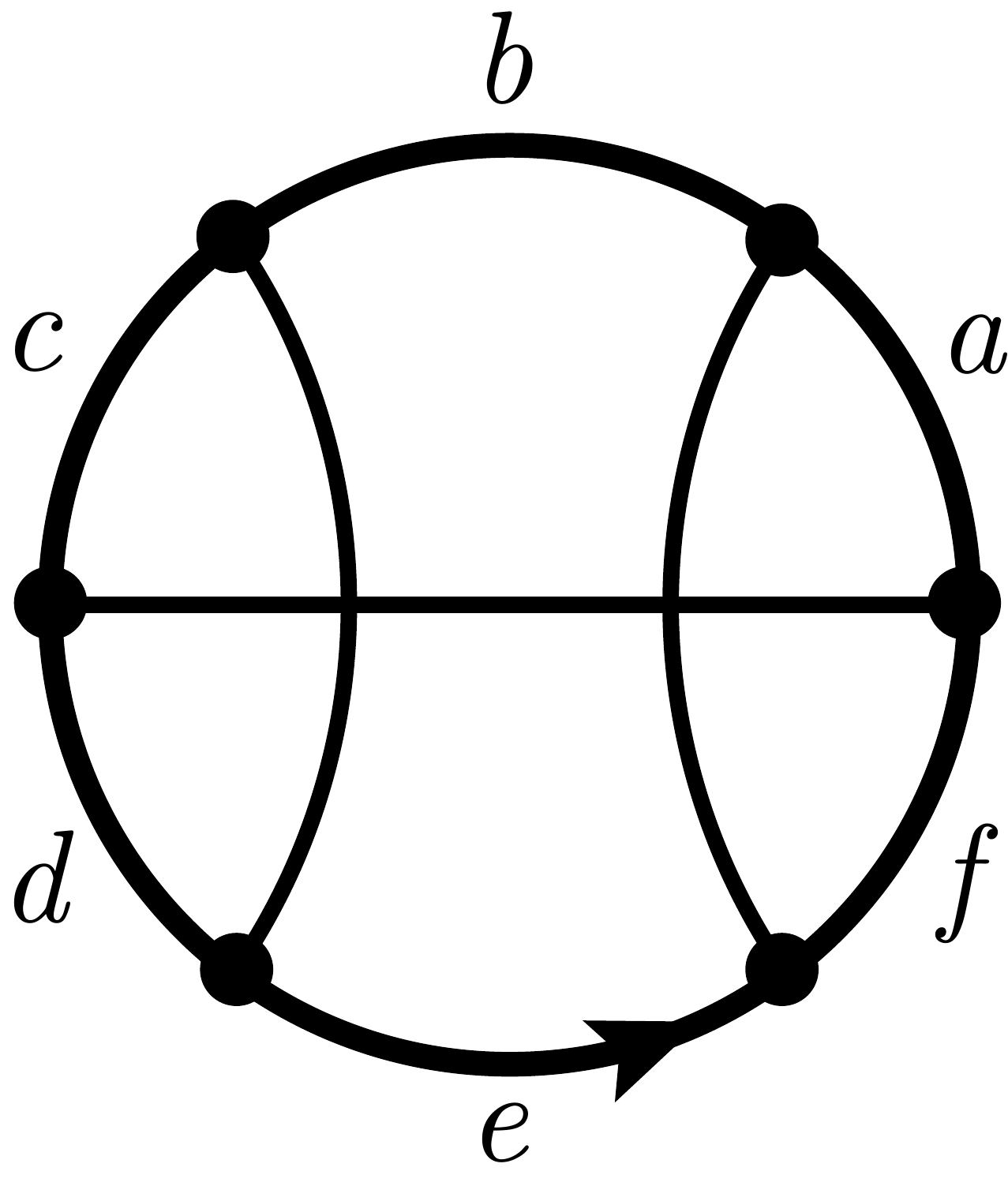}} 
      \put(23,33){\mbox{$\scriptstyle \hat R$}}
      \put(38,37){\mbox{$\scriptstyle \hat R$}}
      \put(8,21){\mbox{$\scriptstyle \hat R$}}
    \end{picture}
  } 
  \quad = \quad
  \hat R^{bf}_{ae} \hat R^{ad}_{cf} \hat R^{ce}_{bd} \quad \in \quad C^\infty(M).
\end{align*}
If $(M,g)$ is a symmetric space, the curvature tensor is
parallel. This will not only have the obvious implication that
$\omega_R$ produces constant functions on $M$, but also that it
satisfies the $4T$ relation on chord diagrams. Therefore, it
defines a weight system and a finite-type invariant of knots through
the universal Vassiliev invariant.

The algebra of chord diagrams, with the $4T$ relation, can be
equivalently represented as closed trivalent Jacobi diagrams, modulo
the IHX and AS relations. Through this viewpoint, there is a striking
connection to metrized Lie algebras, which can be used to construct
weight systems \cite{MR1318886}. This is done by placing the totally
antisymmetric structure tensor $Y \in \frakg^{\otimes 3}$ at each
trivalent internal vertex and contracting edges using the metric. The
Jacobi identity ensures that the IHX relation is satisfied, and one
obtains a central element in the universal enveloping algebra. By
applying a representation of the Lie algebra and taking the trace,
this construction produces a numerical weight system. As we shall see,
the weight systems defined by symmetric spaces are in fact of this Lie
algebra type and realized through the holonomy Lie algebra and its
action on the tangent spaces of the symmetric space.

Using the curvature of a hyper-K\"ahler metric to construct weight
systems was previously proposed by Rozansky and Witten
\cite{MR1481135}. Soon after, it was realized by Kapranov and
Kontsevich \cite{MR1671737,MR1671725} that the construction would also
work in the holomorphic symplectic setting, using the Atiyah class in
place of the curvature. At each trivalent vertex of a Jacobi diagram,
Rozansky and Witten put a copy of the curvature, viewed as one-form $R
\in \Omega^{0,1}(M, T^* \otimes T^* \otimes T^*)$, and contracted
using the holomorphic symplectic form. The one-form part of $R$ is
left out of this contraction, so the result gives a cohomology class
of degree equal to the number of trivalent vertices. It can be
integrated if the dimension of the manifold is appropriate.

The Bianchi identity plays an important role, analogous to the Jacobi
identity, in proving the IHX relation for Rozansky and Witten.  In our
case, the Bianchi identity plays a central role in proving that the
weight systems coming from a symmetric space are of Lie algebra
type. It literally ensures the Jacobi identity for the symmetric
triple (\Fref{def:6}) of the symmetric space. On the other hand, it
is not needed to see that symmetric spaces give rise to weight systems
in the first place. This underlines the curious fact, that weight
systems coming from symmetric spaces are most naturally expressed on
chord diagrams, whereas the Rozansky-Witten weight systems and
Bar-Natan's Lie algebra weights seem to favor the trivalent Jacobi diagrams.

The paper is organized as follows. In Section
\ref{cha:vassiliev-invariants}, we recall the basic theory of
Vassiliev invariants, establishing notation and introducing the Jones
and Yamada polynomials, which will be used to illustrate
throughout. The definitive resources on this theory are Bar-Natan's
paper \cite{MR1318886} and the excellent book \cite{MR2962302}.  In
Section \ref{cha:constr-weight-syst}, we give a simple framework for
constructing weight systems, using what we call a weight tensor, which
must satisfy a variant of the 4T relation. The ubiquitous weight
systems coming from representations of a metrized Lie algebras fit
this description, and the weight tensor is obtained by applying the
representation to the Casimir tensor. Then we prove the main
observation of the paper, which is that the curvature of a
pseudo-Riemannian symmetric space defines a weight tensor.  Finally,
in Section \ref{cha:symmetric-spaces-lie}, we prove that the weight
systems obtained from symmetric spaces are of Lie algebra type and
realized through the holonomy Lie algebra. Moreover, we use the
correspondence between pseudo-Riemannian symmetric spaces and
symmetric Lie algebra triples \cite{MR556610} to show that a weight
tensor of Lie algebra type can be realized on a symmetric space
exactly if it carries the symmetries of a curvature tensor.

\pagebreak[2]
\chapter{Vassiliev Invariants}
\label{cha:vassiliev-invariants}

A knot invariant is a function on the space $\knots$ of ambient
isotopy classes of oriented knots in $\setR^3$. Among such
functions, the \emph{Vassiliev} or \emph{finite-type} invariants
correspond in a certain sense to polynomial functions. To explicate
the polynomial nature of these invariants, one usually considers the
larger space of singular knots, which are allowed to have double
points.
\begin{center}
  \label{examples-knots}
  \begin{tabular}{c@{\qquad\qquad}c}
    \ig[height=16mm]{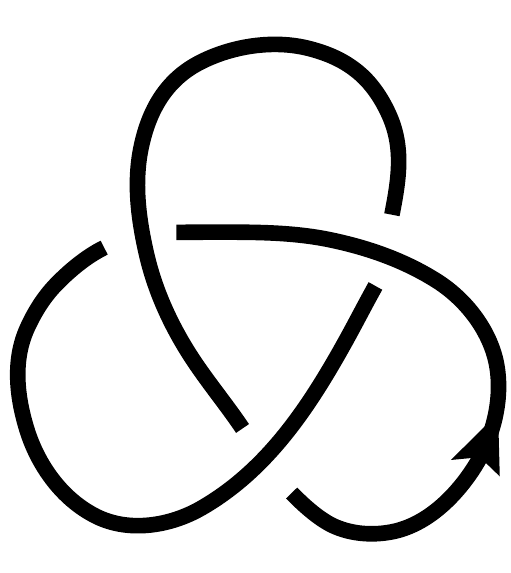} & \ig[height=16mm]{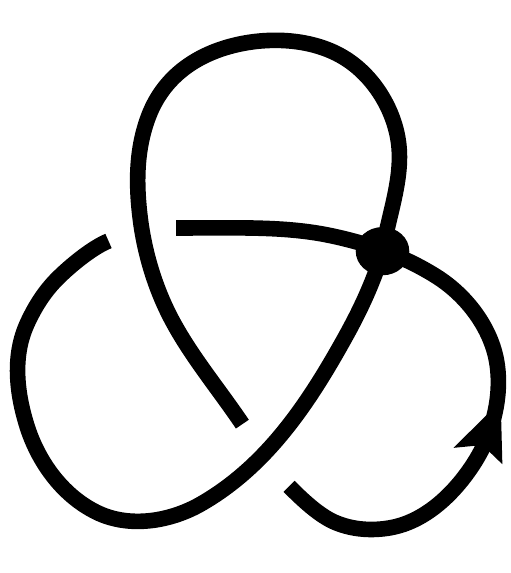}
    \\
    \scriptsize A knot & \scriptsize A singular knot
  \end{tabular}
\end{center}

The set of singular knots with $n$ singular points is denoted by
$\knots_n$.
Any knot invariant $v$ can be extended to singular knots through the
skein relation
\begin{align}
  \label{eq:1}
  v(\rb{\rrlen}{\ig[width=\rwlen]{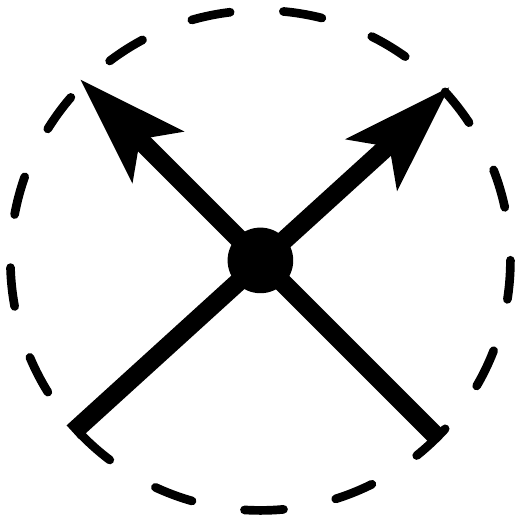}}) = 
  v(\rb{\rrlen}{\ig[width=\rwlen]{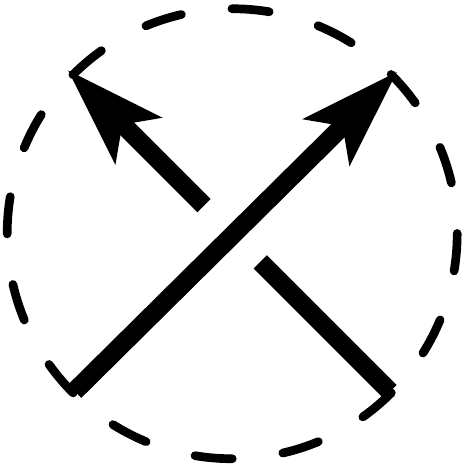}}) - 
  v(\rb{\rrlen}{\ig[width=\rwlen]{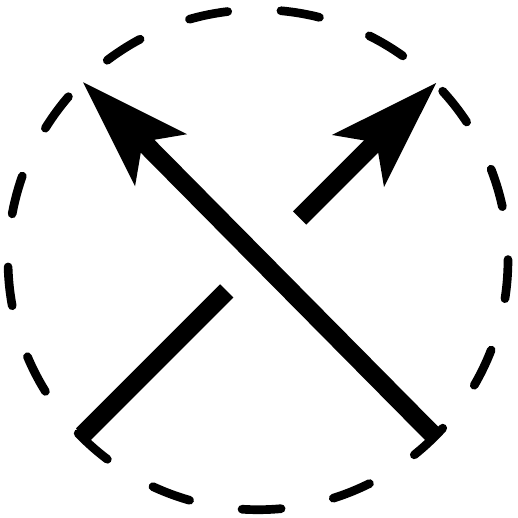}}).
\end{align}
A \emph{Vassiliev} invariant of order at most $n$ is a knot invariant
whose extension vanishes on any singular knot with at least $n+1$
singularities. The extension of a knot invariant to singular knots,
through the equation \eqref{eq:1}, can then be viewed as a derivative of
the invariant. The restriction to knots with $n$ double points
corresponds to the $n$'th derivative, so Vassiliev invariants of order
$n$ are exactly those with vanishing derivative of order
$n+1$. In this sense they are analogous to polynomials of order $n$.

The space of Vassiliev invariants of order $n$ is denoted by
$\vassinv_n$. Clearly we have a natural filtration,
\begin{align*}
  \vassinv_0 \subset \vassinv_1 \subset \vassinv_{2} \subset \ldots
  \subset \bigcup_{n\geq 0} \vassinv_n = \vassinv.
\end{align*}

\begin{example}
  \label{exa:2}
  The Jones polynomial is an invariant of oriented links, with values
  in the Laurent polynomials $\setZ[t^{\frac{1}{2}},t^{\ssm
    \frac{1}{2}}]$. It can be defined by the skein relations
  \begin{align*}
    t^{\ssm 1} J(\rb{\rrlen}{\ig[width=\rwlen]{fig/pcross}}) -
    t \:\!J(\rb{\rrlen}{\ig[width=\rwlen]{fig/ncross}}) = (t^{\frac{1}{2}} -
    t^{\ssm {\frac{1}{2}}}) \:\!
    J(\rb{\rrlen}{\ig[width=\rwlen]{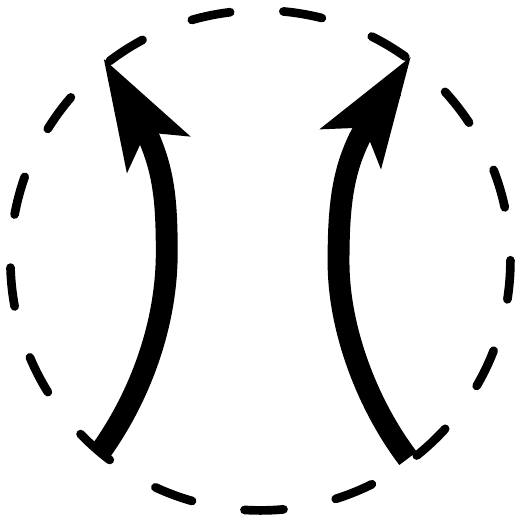}}) \quad \qquad
    \text{and} \quad \qquad
    J(\rb{\rrlen}{\ig[width=\rwlen]{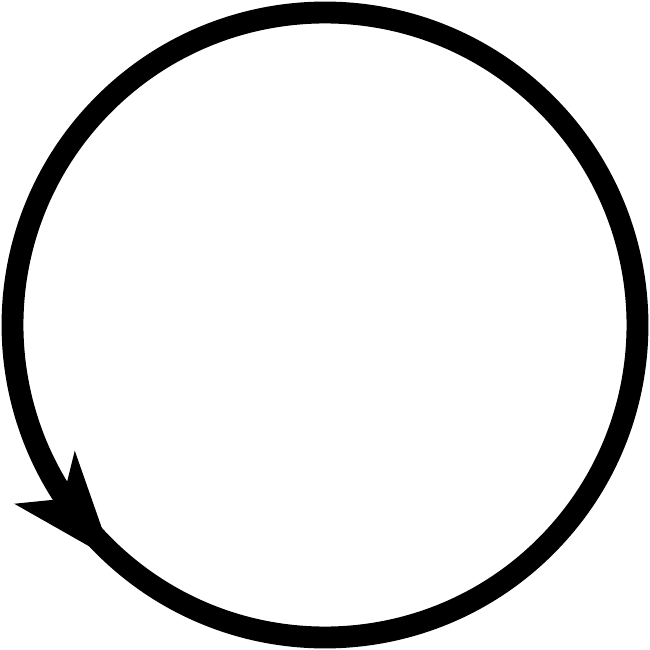}}) = 1.
  \end{align*}
  It follows easily that the Jones polynomial of a knot actually takes
  values in $\setZ[t,t^{\ssm 1}]$. After substituting $t = e^h$ and
  expanding the Jones polynomial as a power series in $h$, the
  coefficient $j_n$ of $h^n$ turns out to be a Vassiliev invariant of
  order $n$ \cite{MR1198809,MR1318886}.  Indeed, it is easy to check
  that
  \begin{align*}
    J(\rb{\rrlen}{\ig[width=\rwlen]{fig/double}}) = 
    J(\rb{\rrlen}{\ig[width=\rwlen]{fig/pcross}}) -
    J(\rb{\rrlen}{\ig[width=\rwlen]{fig/ncross}}) = h ( \,\cdots ),
  \end{align*}
  so the Jones polynomial of a knot $K$ with $n+1$ singularities will
  be divisible by $h^{n+1}$, and in particular $j_n(K)$ will
  vanish. The invariants $j_n$ are called the Jones invariants.
\end{example}

The Jones polynomial is of course not a Vassiliev itself, but we can
view it as a formal power series with coefficients in Vassiliev
invariants. To capture such invariants, one introduces the space of
\emph{power series} Vassiliev invariants
\begin{align*}
  \vassinvc := \prod_{n\geq 0} \vassinv_n \, .
\end{align*}

The natural pointwise multiplication of invariants turns the space
$\vassinv$ of all Vassiliev invariants into a filtered algebra. In
fact, this space also has a natural coproduct $\Delta \colon \vassinv
\to \vassinv \otimes \vassinv$, dual to the operation of connect sum of knots,
\begin{align*}
  \Delta(v)(K_1,K_2) = v(K_1 \# K_2).
\end{align*}
This gives $\vassinv$ the structure a filtered bialgebra.

To any singular knot, we can assign a chord diagram on the circle,
encoding the order of singular points along the knot.  With $A_n$
being the set of chord diagrams with $n$ chords, this assignment
defines a map

\begin{center}
  $\delta \colon \knots_n \to A_n$ \hspace{1cm} e.g. \hspace{1cm}
  \rb{-6.2mm}{\ig[height=16mm]{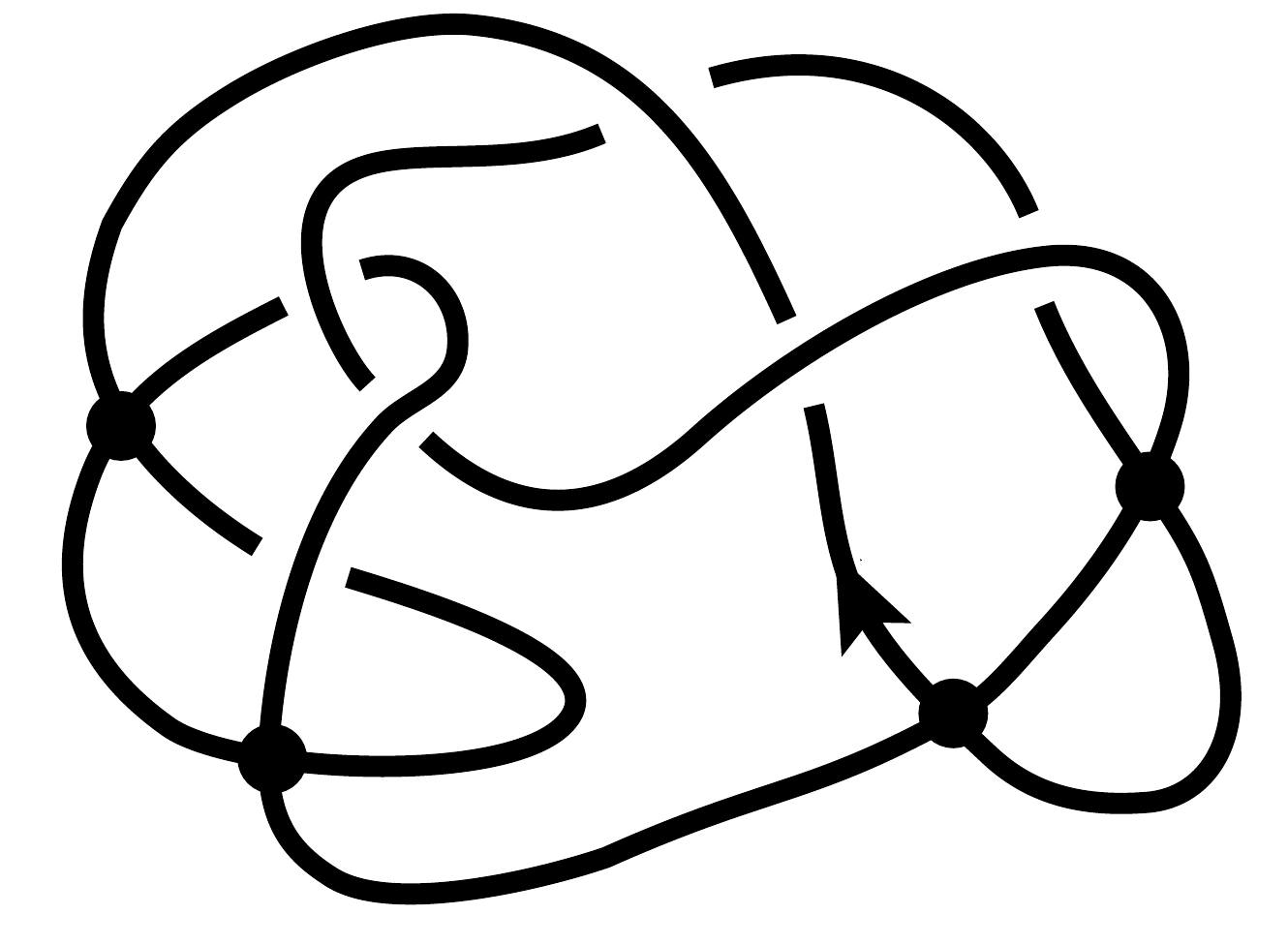}} \hspace{0.1cm} $
  \longmapsto $ \hspace{0.1cm}
  \rb{-4.9mm}{\ig[height=12mm]{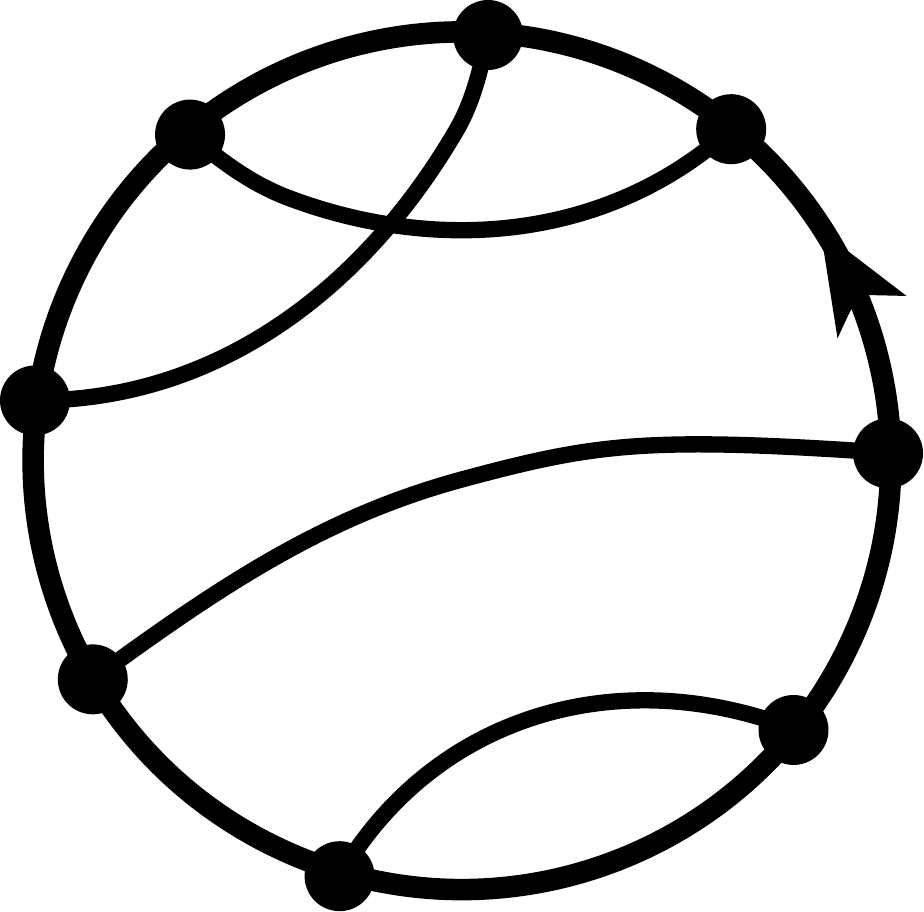}}.
\end{center}
The circle represents the knot, and chords connect points corresponding
to the same double point. If $K \in \knots_n$ is a knot with $n$
singularities and $v \in \vassinv_n$ is a Vassiliev invariant of order
at most $n$, then $v(K)$ is invariant under crossing changes on
$K$. In other words, the evaluation $v(K)$ only depends on the order of
the singularities along the knot, or simply its associated chord
diagram. This means that $v$ induces a map, called the \emph{symbol}
of $v$,
\begin{align*}
  \sigma_n(v) \colon \cd_n \to \setC, \qquad \qquad \sigma_n(v)(D) =
  v(K_{\!\!\:D}),
\end{align*}
where $K_D \in \knots_n$ is any knot with $\delta(K_D) = D$.

Not every map on chord diagrams can be realized as the symbol of a
Vassiliev invariant. Indeed, it is not difficult to show that such a
symbol must satisfy the conditions of the following definition.

\begin{definition}
  \label{def:1}
  Any map $w \colon \cd_n \to \setC$ satisfying the 4T relation,
  \begin{align}
    \label{eq:4Tw}
    w \bigl(\rb{\rrlen}{\ig[width=\rwlen]{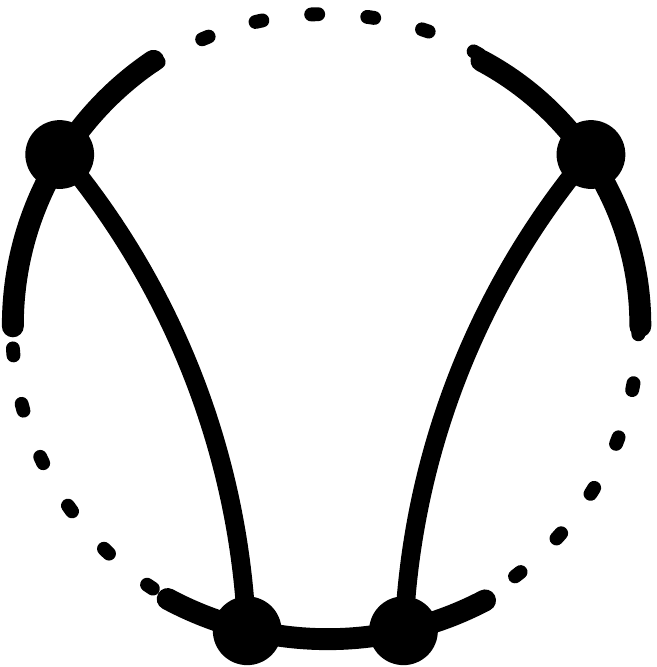}} \bigr) - w
    \bigl(\rb{\rrlen}{\ig[width=\rwlen]{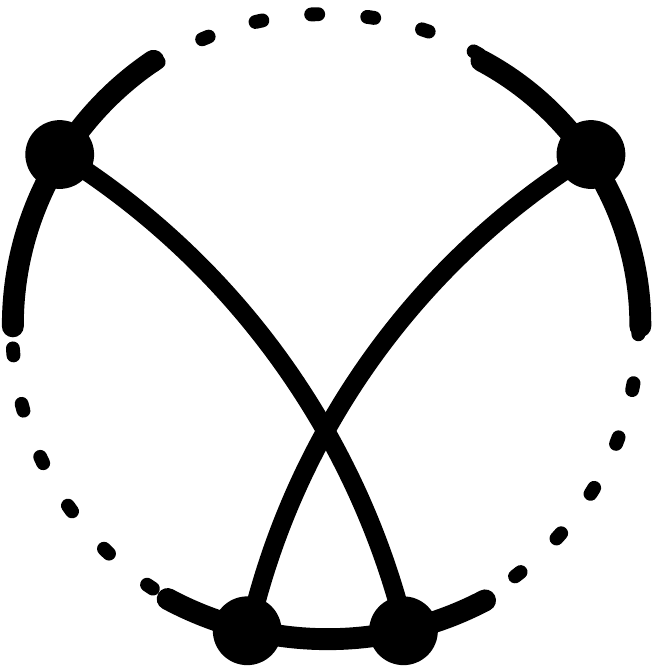}} \bigr) + w
    \bigl(\rb{\rrlen}{\ig[width=\rwlen]{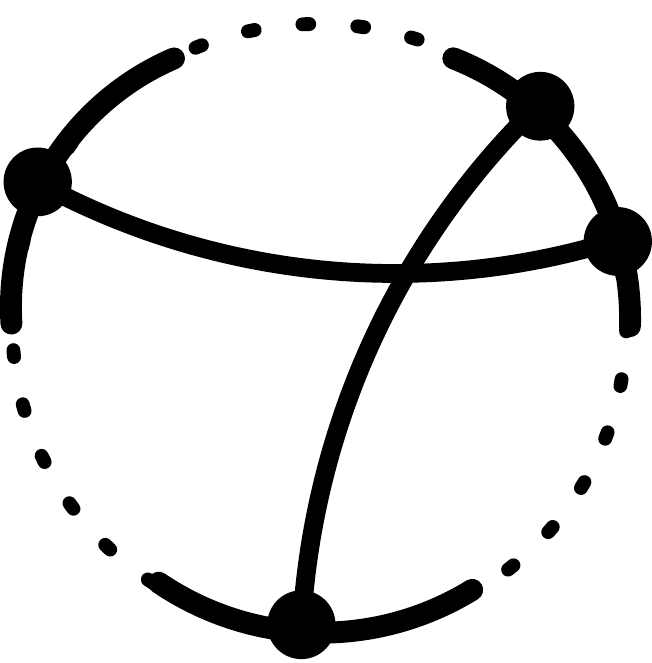}} \bigr) - w
    \bigl(\rb{\rrlen}{\ig[width=\rwlen]{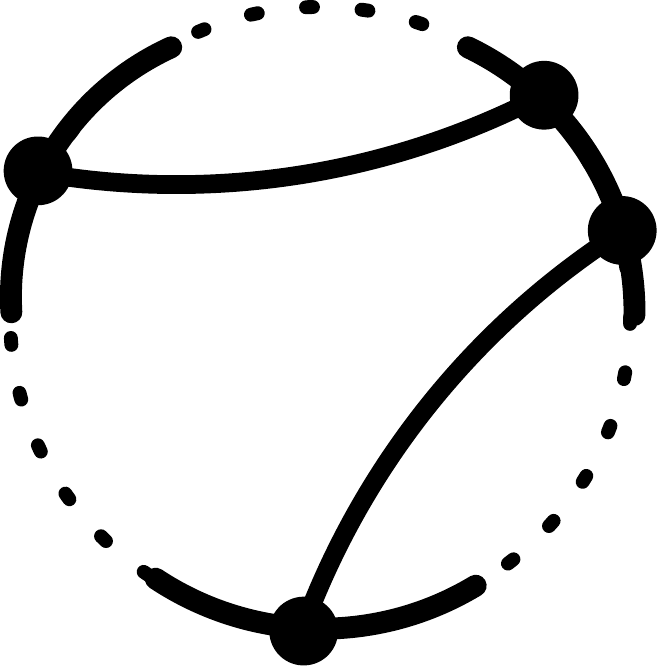}} \bigr) = 0,
  \end{align}
  is called a (framed) \emph{weight system} of order $n$. If in
  addition it satisfies the 1T relation,
  \begin{align}
    \label{eq:1Tw}
    w(\rb{\rrlen}{\ig[width=\rwlen]{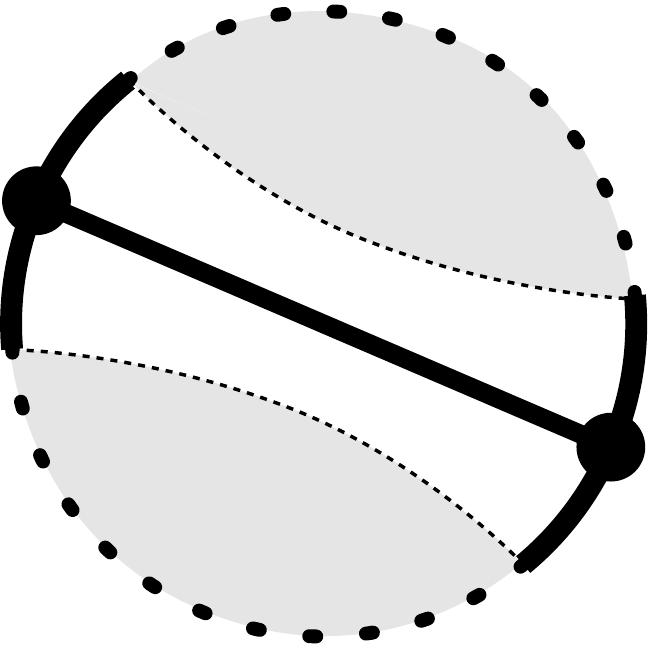}}) = 0,
  \end{align}
  then it is called an \emph{unframed} weight system.
\end{definition}

The diagrams in the 4T relation can have other chords with endpoints
on the dotted parts of the circle and possibly intersecting the shown
chords. The same holds for the 1T relation, except the other chords
are not allowed to intersect the shown chord, meaning they must stay in
the gray regions.

The vector space of (unframed) weight systems of order $n$ is denoted
by $\weights_n$. Since the symbol of a Vassiliev invariant defines a
weight system, we see that the symbol defines a map $\sigma_n \colon
\vassinv_n \to \weights_n$. Clearly two Vassiliev invariants of order
$n$ have the same symbol if and only if their difference is in
$\vassinv_{n-1}$, which is therefore the kernel of $\sigma_n$. This
means that the symbol descends to an injective map
\begin{align*}
  \overline \sigma_n \colon \vassinv_n / \vassinv_{n-1} \to
  \weights_n,
\end{align*}
which is in fact an isomorphism. Indeed, surjectivity follows
immediately from the following fundamental theorem of Kontsevich
\cite{MR1237836}.

\begin{theorem}
  \label{thm:1}
  There exists a map $J_n \colon \weights_n \to \vassinv_v$ such
  that $\sigma_n \circ J_n = \Id$.
\end{theorem}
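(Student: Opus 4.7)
The plan is to construct an explicit section of $\sigma_n$ via the Kontsevich integral, which simultaneously handles all orders $n$ by producing a universal invariant taking values in the completed space of chord diagrams $\fcdac = \prod_n \cda_n$. Once we have a universal invariant $Z \colon \knots \to \fcdac$ with the property that its order-$n$ part, viewed on singular knots with $n$ double points, recovers the chord diagram itself, we can simply define $J_n(w)(K) = w(Z(K)_n)$ for any weight system $w \in \weights_n$, and the identity $\sigma_n \circ J_n = \Id$ will fall out of this leading-order property.

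To construct $Z$, present a knot $K$ as a Morse embedding in $\setC \times \setR$, with $t$ denoting the vertical coordinate. For each $m$ and each choice of an unordered pairing $P = \{(z_j, z_j')\}_{j=1}^m$ of points of $K$ lying at heights $t_1 < \cdots < t_m$, let $D_P \in \cda_m$ be the chord diagram obtained by connecting, along the circle parametrizing $K$, the two preimages of each pair. Set
\begin{equation*}
  Z(K) \;=\; \sum_{m \geq 0} \frac{1}{(2\pi i)^m} \int\limits_{t_1 < \cdots < t_m} \sum_P (-1)^{|P_\downarrow|} D_P \bigwedge_{j=1}^m \frac{dz_j - dz_j'}{z_j - z_j'},
\end{equation*}
where $|P_\downarrow|$ counts strands oriented downward at the chosen points. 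I would then check convergence (the $1/(z-z')$ singularities are integrable thanks to the Morse condition) and horizontal-isotopy invariance: the derivative of the integrand in a one-parameter family is exact, and after integration by parts the boundary terms assemble themselves into the left-hand side of the 4T relation in \eqref{eq:4Tw}, which vanishes in $\cda$.

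The delicate point, and the main obstacle, is invariance under isotopies that change the Morse structure by creating or cancelling a pair of adjacent critical points. Here the raw $Z$ is not invariant; one must normalize by dividing out $Z(\cup)^{c/2}$, where $c$ is the number of critical points of $K$, producing the corrected invariant $\hat Z$. Verifying that this normalization exactly compensates for the discrepancy requires an analysis of the Kontsevich integral of a small ``hump'' and is the technical heart of the construction. Having completed this, $\hat Z$ is a genuine knot invariant and, because each degree is a finite combination of chord diagrams in $\cda_n$, each component $\hat Z_n$ pushed through any $w \in \weights_n$ yields a well-defined complex-valued invariant $J_n(w) = w \circ \hat Z_n$.

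Finally, verify the two required properties of $J_n(w)$. For the finite-type property, extend $\hat Z$ to singular knots via the defining skein relation \eqref{eq:1}; one checks, by an inductive expansion at each double point, that each double point of $K$ contributes at least one chord to every surviving term, so $\hat Z_m(K) = 0$ for $m < \#\text{sing}(K)$. In particular $J_n(w)$ vanishes on knots with more than $n$ singularities, placing it in $\vassinv_n$. For the identity $\sigma_n(J_n(w)) = w$, apply the same inductive expansion to a singular knot $K_D$ with exactly $n$ double points and chord diagram $D$: the only surviving contribution to $\hat Z_n(K_D)$ comes from pairing each double point with itself, and an elementary residue computation gives $\hat Z_n(K_D) = D$ in $\cda_n$. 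Thus $J_n(w)(K_D) = w(D)$, which is precisely $\sigma_n(J_n(w))(D) = w(D)$, proving the theorem.
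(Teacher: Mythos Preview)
The paper does not supply its own proof of this theorem; it merely states it and remarks that ``the proof uses the celebrated Kontsevich integral to construct the invariants on Morse knots,'' with a citation to Kontsevich and a mention of the combinatorial alternative via the Drinfeld associator. Your proposal is precisely a sketch of that Kontsevich-integral argument, so in approach you are aligned with what the paper invokes.

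A few small comments on the sketch itself. First, you write $\fcdac = \prod_n \cda_n$, mixing the paper's framed and unframed notation; since the statement concerns unframed weight systems $\weights_n$ and unframed invariants $\vassinv_n$, you should be landing in $\cdac = \prod_n \cda_n$, and the passage from the raw Kontsevich integral (which naturally lives in the framed space $\fcdac$) to the unframed quotient is exactly what your hump normalization accomplishes --- but this deserves to be said explicitly, as it is where the 1T relation enters. Second, the normalization is usually written as $Z(K)/Z(H)^{c/2-1}$ with $H$ the unknot embedded with a single maximum and minimum, rather than $Z(\cup)^{c/2}$; the exponent matters for getting the value on the unknot right. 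Third, your claim that the boundary terms in the horizontal-isotopy argument ``assemble themselves into the left-hand side of the 4T relation'' is the right slogan but hides a nontrivial computation involving the configuration-space boundary strata; this is where the real work lies, together with the hump analysis you flag. None of these are fatal --- they are the expected level of detail for a sketch --- but a complete proof would require substantially more than what you have written, which is why the paper simply cites the result.
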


The proof uses the celebrated Kontsevich integral to construct the
invariants on Morse knots. There is also a combinatorial construction
using the Drinfeld associator \cite{MR1221650,MR1324388}. The
resulting map $J_n$ yields a preferred Vassiliev invariant having a
given weight system as its symbol, and such invariants in the image of
$J_n$ are called \emph{canonical}. In fact, the Jones invariants
described in \Fref{exa:2} are canonical. This follows by their
relation to quantum invariants, to which we shall return, and the work
of Le-Murakami and Kassel \cite{MR1394520,MR1321145}. The theorem
above is often formulated in terms of the so-called universal
Vassiliev invariant. We shall review this description after briefly
recalling the situation for framed knots.

\subsection{Framed Knots}
The theory of Vassiliev invariants also applies to framed knots, and
in some sense more naturally. In this case, the framing for a
singular knot is allowed to have simple zeros away from the double
points.

\begin{center}
  \label{examples-framed-knots}
  \begin{tabular}{c@{\qquad\qquad}c}
    \ig[height=16mm]{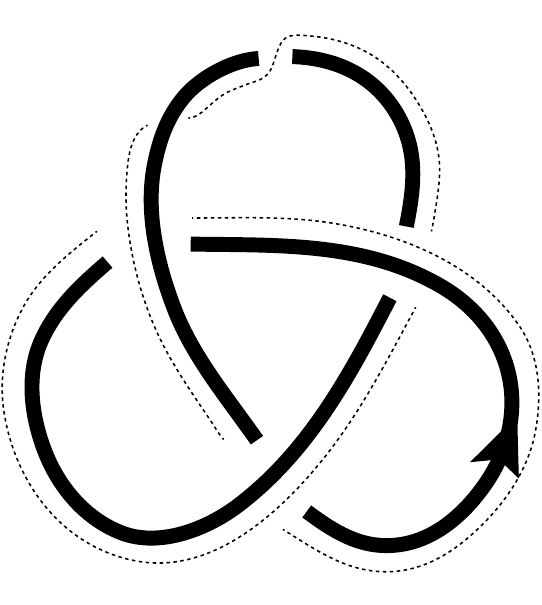} & \ig[height=16mm]{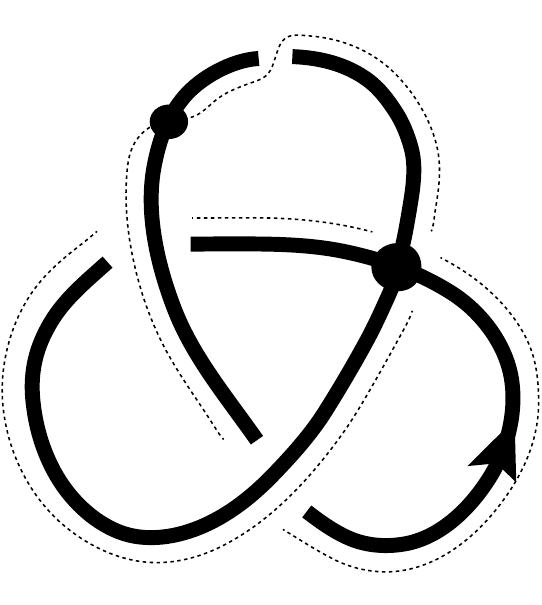}  
    \\
    \scriptsize A framed knot & \scriptsize \clap{A singular framed knot}
  \end{tabular}
\end{center}
The set of singular knots with $n$ singular points, of the knot or the
framing, is denoted by $\fknots_n$. Framed knot invariants are
extended to singular knots by resolving singularities of the framing
through the relation 
\begin{align}
  \label{eq:2}
  v \bigl(\, \rb{-1.7mm}{\ig[width=3.6mm]{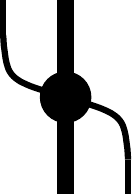}} \,\bigr) =
  v \bigl(\, \rb{-1.7mm}{\ig[width=3.6mm]{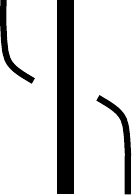}}\, \bigr) -
  v \bigl(\, \rb{-1.7mm}{\ig[width=3.6mm]{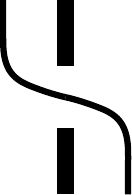}} \,\bigr).
\end{align}
A framed Vassiliev invariant of order $n$ vanishes on all knots with
at least $n+1$ singularities. The space of such invariants is denoted
by $\fvassinv_n$.

\begin{example}
  \label{exa:6}
  The Yamada polynomial \cite{MR1016274} is an invariant of framed
  unoriented links, with values in $\setZ[q^{\frac{1}{2}},q^{\ssm
    \frac{1}{2}}]$. It is defined by the skein relations
  \begin{align*}
    &Y(\rb{\rrlen}{\ig[width=\rwlen]{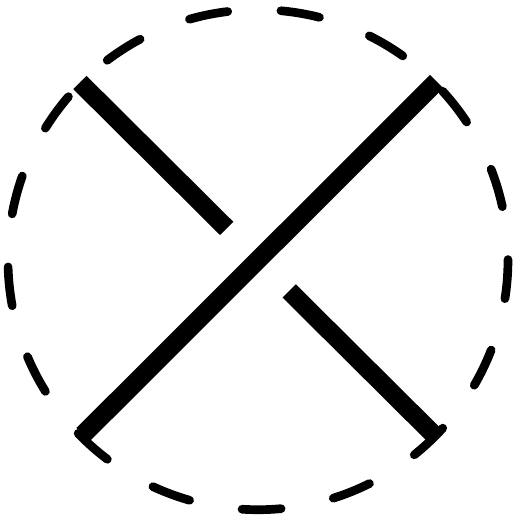}}) - 
    Y(\rb{\rrlen}{\ig[width=\rwlen]{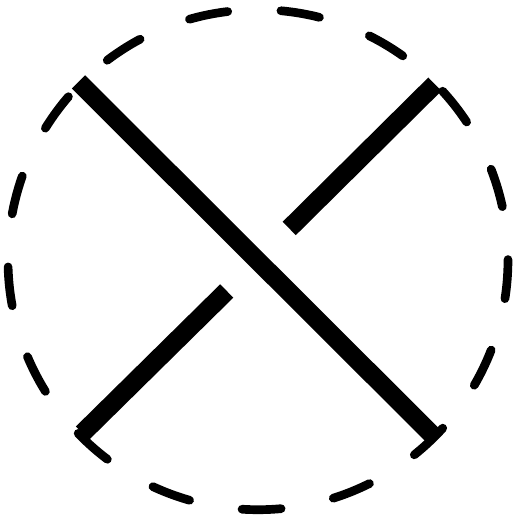}}) \, =\,
    (q^{\frac{1}{2}}-q^{\ssm \frac{1}{2}}) \bigg (
    Y(\rb{\rrlen}{\ig[width=\rwlen]{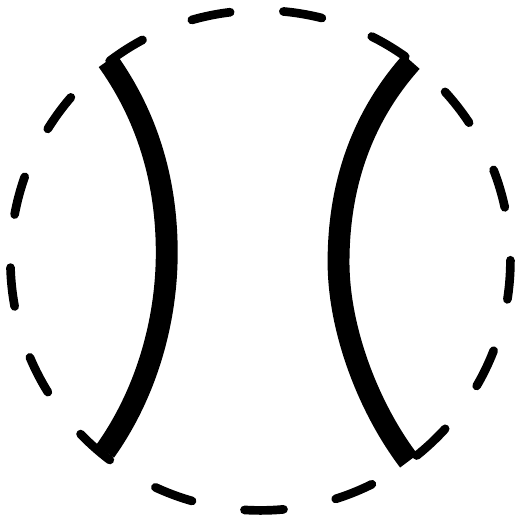}}) -
    Y(\rb{\rrlen}{\ig[width=\rwlen]{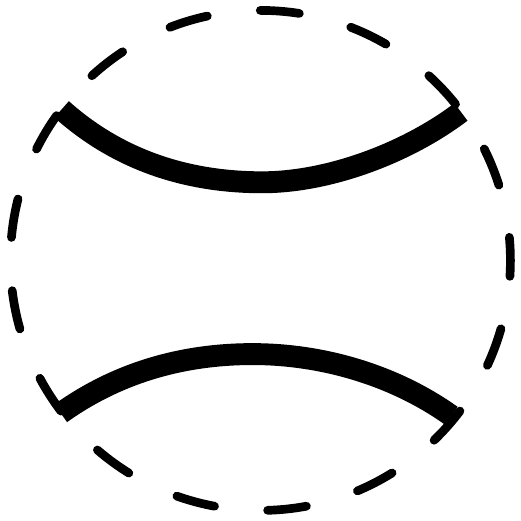}}) \bigg )
  \end{align*}
  with framing and initial conditions
  \begin{align*}
    &q^{\ssm 1} \,
    Y(\rb{\rrlen}{\ig[width=\rwlen]{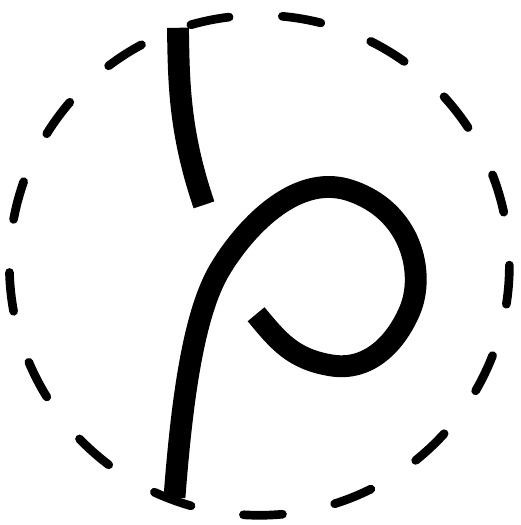}}) \, = \,
    q\, Y(\rb{\rrlen}{\ig[width=\rwlen]{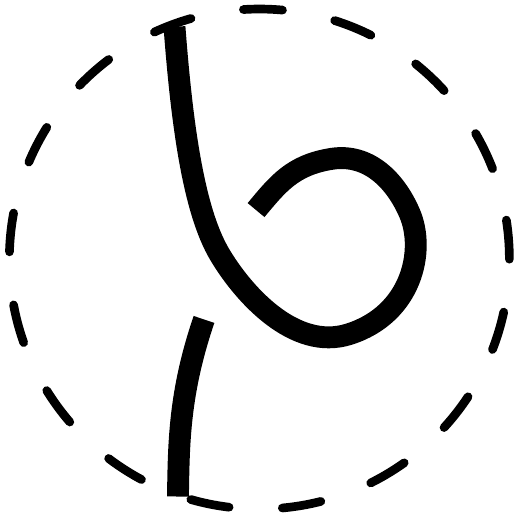}}) \,=\,
    Y(\rb{\rrlen}{\ig[width=\rwlen]{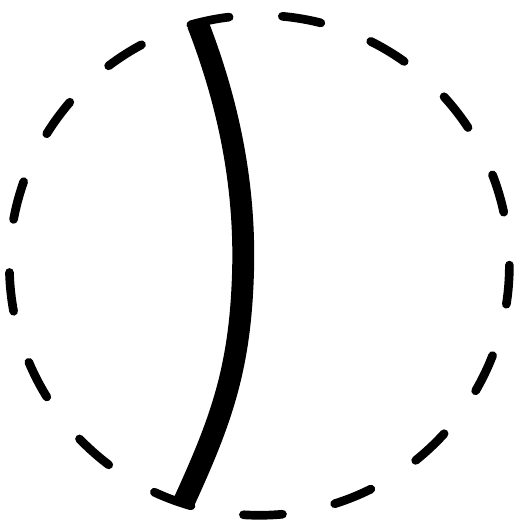}}) \qquad
    \text{and} \qquad
    Y(\rb{\rrlen}{\ig[width=\rwlen]{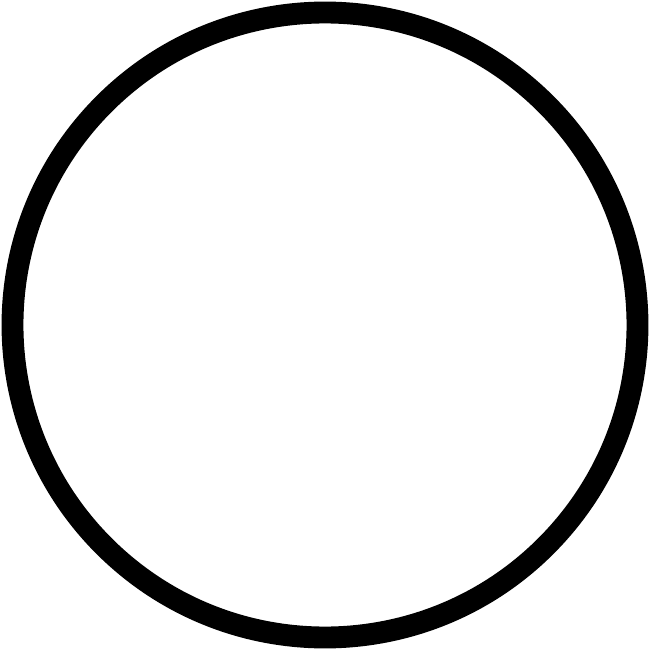}}) \,=\,
    q^{\frac{1}{2}}+q^{\ssm \frac{1}{2}} + 1.
  \end{align*}
  This is a specialization of the two-variable Dubrovnik polynomial,
  which is a variant of the Kauffman polynomial. Of course the Yamada
  polynomial defines an invariant of oriented knots simply by ignoring
  the orientation. In fact, the Yamada polynomial is essentially given
  by evaluating the Jones polynomial on the (2,0)-cabling of a knot
  \nolinebreak[2]\cite{MR1016274}.

  Performing the substitution $q = e^h$ and expanding in powers of
  $h$, the coefficient $y_n$ of $h^n$ will be a Vassiliev invariant of
  order $n$. Indeed, one checks that
  \begin{align*}
    Y(\rb{\rrlen}{\ig[width=\rwlen]{fig/double}}) = 
    Y(\rb{\rrlen}{\ig[width=\rwlen]{fig/pcross-uno}}) -
    Y(\rb{\rrlen}{\ig[width=\rwlen]{fig/ncross-uno}}) = h ( \,\cdots ),
  \end{align*}
  so the Yamada polynomial of a knot $K$ with $n+1$ double points will
  be divisible by $h^{n+1}$, and in particular $y_n(K)$ will vanish.
\end{example}

When calculating the value of a
Vassiliev invariant $v \in \fvassinv_n$ on a framed knot $K \in
\fknots_n$, the zeros of the framing can be replaced by double points
of the knot, simply because 
\begin{align*}
  v(\rb{-1.2mm}{\ig[width=8mm]{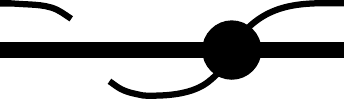}}) = \frac{1}{2}
  v(\rb{-1.55mm}{\ig[width=13mm]{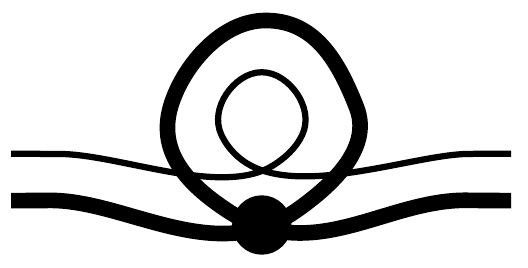}}) - \frac{1}{2}
  v(\rb{-1.2mm}{\ig[width=13mm]{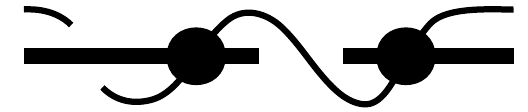}}) = \frac{1}{2}
  v(\rb{-1.55mm}{\ig[width=13mm]{fig/f3}}).
\end{align*}
This means that we can use the same chord diagrams in the framed case.
The map $\delta^\ssf \colon \fknots_n \to A_n$ sends a zero of the
framing to a chord connecting two adjacent points on the circle. As in
the unframed case, we therefore get a symbol map
\begin{align*}
  \sigma^\ssf_n(v) \colon \cd_n \to \setC, \qquad \qquad \sigma^\ssf_n(v)(D)
  = v(K_{\!\!\:D}),
\end{align*}
where $K_D \in \fknots_n$ is any framed knot with $\delta^\ssf(K_D) = D$.

\begin{example}
  \label{exa:8}
  Let us calculate the symbol of the Yamada polynomial.  First of all,
  we observe that
  \begin{align*}
    Y(\rb{\rrlen}{\ig[width=\rwlen]{fig/double}}) = 
    Y(\rb{\rrlen}{\ig[width=\rwlen]{fig/pcross-uno}}) -
    Y(\rb{\rrlen}{\ig[width=\rwlen]{fig/ncross-uno}}) =  h \bigg (
    y_0(\rb{\rrlen}{\ig[width=\rwlen]{fig/twovertical-uno}}) - 
    y_0(\rb{\rrlen}{\ig[width=\rwlen]{fig/twohorizontal-uno}})
    \bigg) + O(h^2).
  \end{align*}
  This means that $y_n$ evaluated on a link with $n$ double points is
  given by the signed sum of $y_0$ evaluated on all possible ways of
  smoothing the double points vertically or horizontally. To write
  down a formula, let $s$ denote a map from the double points of a
  given link $L$ to the set $\{1,-1\}$,\pagebreak[1] and let $L_s$ be
  the link obtained by smoothing each double point of $L$ according to
  the rule
  \begin{align*}
    \rb{\rrlen}{
      \begin{picture}(27,27)(0,0)
        \put(0,0){\ig[width=\rwlen]{fig/double}} 
        \put(10,3){\mbox{$\scriptstyle d$}}
      \end{picture}
    }    
    \rightsquigarrow \rb{\rrlen}{\ig[width=\rwlen]{fig/twovertical-uno}}
    \quad \text{if} \quad
    s(d) = 1 
    \qquad \text{and} \qquad 
    \rb{\rrlen}{
      \begin{picture}(27,27)(0,0)
        \put(0,0){\ig[width=\rwlen]{fig/double}} 
        \put(10,3){\mbox{$\scriptstyle d$}}
      \end{picture}
    }    
    \rightsquigarrow \rb{\rrlen}{\ig[width=\rwlen]{fig/twohorizontal-uno}}
    \quad \text{if} \quad
    s(d) = 1.
  \end{align*}
  If $(-1)^{\abs{s}}$ denotes the sign of $s$, defined as the
  product of its values on all double points, then we have 
  \begin{align*}
    y_n(L) = \sum_s (-1)^{\abs{s}} y_0(L_s) = \sum_s (-1)^{\abs{s}} 3^{c(L_s)},
  \end{align*}
  where the sum is over all maps $s$, and $c(L_s)$ denotes the number
  of components of the smoothing $L_s$. The fact that $y_0(L) = c(L)$
  for any link $L$ is easily verified.
  
  On the level of chord diagrams, this can be visualized in the
  following way. Once again, let $s$ denote a map from the chords of a
  given diagram $D$ to the set $\{1,-1\}$, and let $D_s$ be the
  collection of circles obtained by smoothing each chord according to
  the rule
  \begin{align*}
    \rb{\rrlen}{
      \begin{picture}(27,27)(0,0)
        \put(0,0){\ig[width=\rwlen]{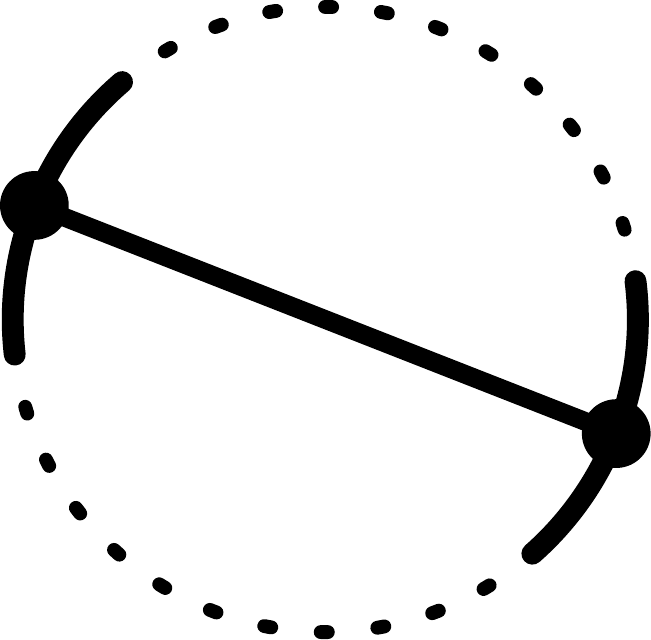}} 
        \put(13,15){\mbox{$\scriptstyle c$}}
      \end{picture}
    }    
    \rightsquigarrow \rb{\rrlen}{\ig[width=\rwlen]{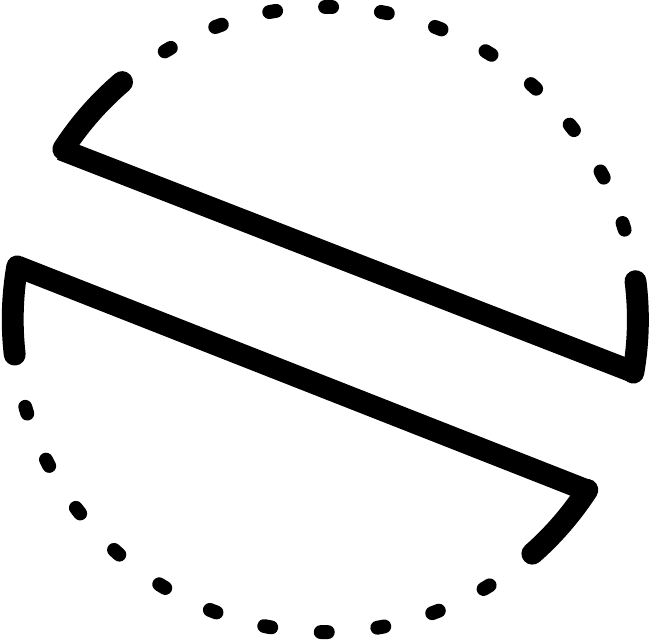}}
    \quad \text{if} \quad
    s(c) = 1 
    \qquad \text{and} \qquad 
    \rb{\rrlen}{
      \begin{picture}(27,27)(0,0)
        \put(0,0){\ig[width=\rwlen]{fig/1C}} 
        \put(13,15){\mbox{$\scriptstyle c$}}
      \end{picture}
    }    
    \rightsquigarrow \rb{\rrlen}{\ig[width=\rwlen]{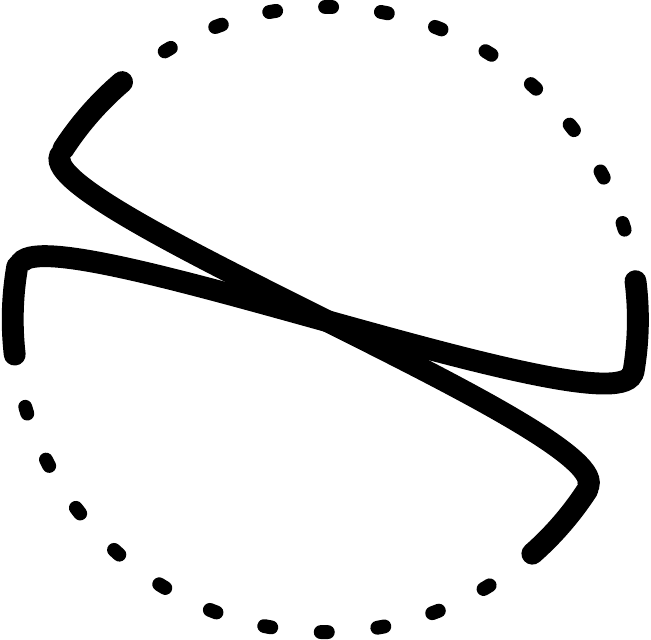}}
    \quad \text{if} \quad
    s(c) = -1.
  \end{align*}
  Then the symbol of $y_n$ is given by
  \begin{align*}
    \sigma^\ssf_n(y_n)(D) = \sum_s (-1)^{\abs{s}} 3^{c(D_s)},
  \end{align*}
  where the sum is over all maps $s$ on the chords of $D$, and $c(D_s)$
  is the number of components of the resolved diagram $D_s$.
\end{example}

The symbol of a framed invariant satisfies the 4T relation, but not
the 1T relation. Indeed, if $\fweights_n$ denotes the set of (framed)
weight systems, then the symbol descends to a map  
\begin{align}
  \label{eq:3}
  \overline{\sigma}^\ssf_n \colon \fvassinv_n / \fvassinv_{n-1} \to
  \fweights_n,
\end{align}
and \Fref{thm:1} has the following analogue.
\begin{theorem}
  \label{thm:3}
  There exists a map $J^\ssf_n \colon \fweights_n \to \fvassinv_n$ such
  that $\sigma^\ssf_n \circ J^\ssf_n = \Id$.
\end{theorem}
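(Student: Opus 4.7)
The strategy is to imitate the construction used in the proof of \Fref{thm:1}: build a universal framed invariant $Z^\ssf \colon \fknots \to \fcdac$ taking values in the graded completion of chord diagrams modulo the 4T relation, and then, for each $w \in \fweights_n$, set $J^\ssf_n(w)(K) := w(Z^\ssf_n(K))$, where $Z^\ssf_n$ denotes the degree-$n$ component of $Z^\ssf$.

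The natural candidate for $Z^\ssf$ is the framed Kontsevich integral. Putting a knot in Morse position with respect to a height function $t$, the degree-$n$ summand of $Z^\ssf$ is the iterated integral over $t_1 < \cdots < t_n$ of the logarithmic forms $d\log(z^+_j - z^-_j)$ associated to choices of pairs of points at each height, weighted by the corresponding chord diagram. In the framed case one may work with blackboard-framed Morse knots, so that the contributions from critical points of the framing are trivial and no normalization against the unknot is required; the 4T relation on the target arises from the way endpoints of chords slide past each other. Alternatively, one can build $Z^\ssf$ combinatorially from a Drinfeld associator, as in \cite{MR1221650, MR1324388}.

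The crucial property we need of $Z^\ssf$ is universality: extending to singular framed knots via \eqref{eq:1} and \eqref{eq:2}, we must have $Z^\ssf_m(K) = 0$ for $K \in \fknots_k$ whenever $m < k$, and $Z^\ssf_k(K) = \delta^\ssf(K)$ for $K \in \fknots_k$. Granting this, the composite $w \circ Z^\ssf_n$ vanishes on any singular framed knot with more than $n$ singularities, so $J^\ssf_n(w) \in \fvassinv_n$. Evaluating on a knot $K_D \in \fknots_n$ realizing a diagram $D$, we recover $\sigma^\ssf_n(J^\ssf_n(w))(D) = w(\delta^\ssf(K_D)) = w(D)$, establishing $\sigma^\ssf_n \circ J^\ssf_n = \Id$.

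The main obstacle is the construction of $Z^\ssf$ itself and the verification of its framed isotopy invariance: the iterated integrals require convergence arguments near critical points of the Morse height function, and invariance under framed Reidemeister moves must be checked carefully (in particular, the behaviour under changes of Morse representative and the cancellation of contributions from short strands near critical points). These are the substantive steps, but they are classical and carried out in detail in \cite{MR1237836, MR2962302}, so I would simply invoke that machinery in its framed incarnation.
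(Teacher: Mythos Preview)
Your approach matches the paper's: the paper does not supply its own argument but simply cites the literature, noting that the first proof is due to Le and Murakami \cite{MR1394520} via the Drinfeld associator, with an alternative using a framed Kontsevich integral given by Goryunov \cite{MR1738389}. Your sketch of the framed Kontsevich integral construction and the universality argument is correct in outline; the only adjustment is bibliographic --- Kontsevich's original paper \cite{MR1237836} handles the unframed case, so for the framed version the appropriate references are \cite{MR1738389} and \cite{MR1394520} rather than \cite{MR1237836}.
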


The first proof of this is due to Le and Murakami \cite{MR1394520}, and
employs a combinatorial description using the Drinfeld associator. An
alternative proof, using a framed version of the Kontsevich integral,
is provided by Goryunov \cite{MR1738389}. Once again, invariants in
the image of $J^\ssf$ are called canonical. 

Through the work of Reshetikhin and Turaev \cite{MR939474,MR1036112},
representations of quantum groups provide a rich source of framed knot
invariants. This elaborate construction associates an invariant
of framed knots to any representation of a semi-simple Lie algebra
\nolinebreak[4] $\frakg$. In fact, the representation of $\frakg$ can be deformed to a
representation of the quantum group $U_q(\frakg)$, yielding a solution
to the Yang-Baxter equation which is then used in constructing the
invariant.
\begin{example}
  \label{exa:3}
  For the standard representation of $\sl_2$, the resulting quantum
  invariant $\qinv^\ssf_{\sl_2}$ can be characterized by the skein
  relation
  \begin{align*}
    q^{\frac{1}{4}}
    \qinv^\ssf_{\sl_2}(\rb{\rrlen}{\ig[width=\rwlen]{fig/pcross}}) 
    - q^{\ssm \frac{1}{4}}
    \:\!\qinv^\ssf_{\sl_2}(\rb{\rrlen}{\ig[width=\rwlen]{fig/ncross}}) =
    (q^{\frac{1}{2}} - q^{\ssm \frac{1}{2}}) \:\!  \qinv^\ssf_{\sl_2}
    (\rb{\rrlen}{\ig[width=\rwlen]{fig/twovertical}}) 
  \end{align*}
  with the framing and initial conditions
  \begin{align*}
    \qinv^\ssf_{\sl_2} (\rb{\rrlen}{\ig[width=\rwlen]{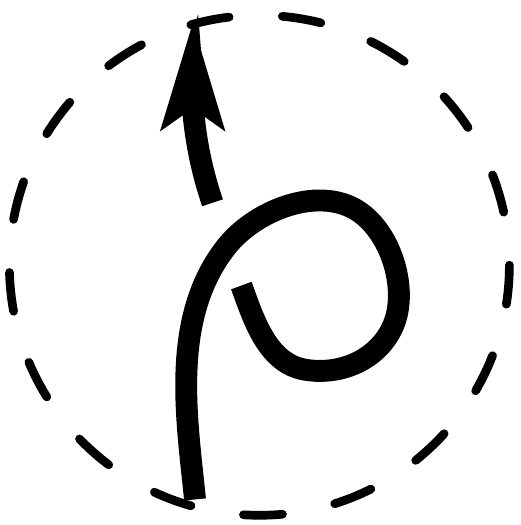}}) =
    q^{\frac{3}{2}} \, \qinv^\ssf_{\sl_2}
      (\rb{\rrlen}{\ig[width=\rwlen]{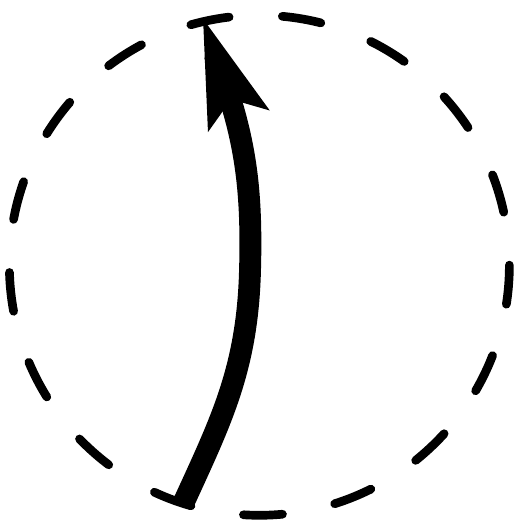}}) \quad \;\;
      \text{and} \quad \;\;\qinv^\ssf_{\sl_2}
      (\rb{\rrlen}{\ig[width=\rwlen]{fig/unknot}}) =
      {q^{\frac{1}{2}}+q^{\ssm \frac{1}{2}}}.
  \end{align*}
  There is a standard way of deframing general quantum invariants to
  produce invariants of unframed knots. For $\qinv^\ssf_{\sl_2}$ this
  deframing procedure amounts to
  \begin{align*}
    \qinv_{\sl_2}(K) = q^{-3/4\cdot w(K)} \qinv^\ssf_{\sl_2}(K),
  \end{align*}
  where $K$ is a knot diagram and $w(K)$ is its writhe, or simply the
  difference between the number of positive and negative crossings in
  the diagram. This means that the deframed invariant satisfies
  \begin{align*}
    q \;\!\qinv_{\sl_2} (\rb{\rrlen}{\ig[width=\rwlen]{fig/pcross}}) -
    q^{\ssm 1}
    \:\!\qinv_{\sl_2}(\rb{\rrlen}{\ig[width=\rwlen]{fig/ncross}}) =
    (q^{\frac{1}{2}} - q^{\ssm \frac{1}{2}}) \:\!  \qinv_{\sl_2}
    (\rb{\rrlen}{\ig[width=\rwlen]{fig/twovertical}}),
  \end{align*} 
  so in fact the Jones polynomial of a knot $K$ can be
  expressed as 
  \begin{align*}
    J(K) = (t^{\frac{1}{2}}+t^{\ssm \frac{1}{2}})^{\ssm 1}
    \!\;\qinv_{\sl_2} (K) \vert_{q = t^{- 1}}.
  \end{align*}
  In this sense, the quantum invariant $\qinv^\ssf_{\sl_2}$ constitutes a
  natural framed version of the Jones polynomial. 
\end{example}

\begin{example}
  \label{exa:7}
  The quantum invariant $\qinv^\ssf_{\so_3}$ arising from the
  standard representation of $\so_3$ is equal to the Yamada polynomial
  described in \Fref{exa:6} (see \cite{MR1854694}). 
\end{example}

It is a general result of Birman and Lin \cite{MR1198809} that any
quantum invariant produces a power series Vassiliev invariant by
substituting $q = e^h$ and expanding in $h$. We saw examples of this
in \Fref{exa:2} and \Fref{exa:6}. Moreover, the work of Le-Murakami
and Kassel \cite{MR1394520,MR1321145} establishes that these
finite-type invariants are in fact canonical. In particular, this
holds for the Yamada polynomial and for the Jones polynomial, as we
have already mentioned.

\subsection*{The Universal Vassiliev Invariant}

The universal Vassiliev invariant gives a unified way of encoding the
maps $J^\ssf_n$ of \Fref{thm:3}. We shall focus on the framed case,
but the description works equally well in the unframed setting. To
describe the universal Vassiliev invariant, it is convenient to
consider the 4T relation on the vector space spanned by chord
diagrams.

\begin{definition}
  \label{def:2}
  The space of (framed) chord diagrams $\fcda_n$ of order $n$ is the
  vector space $\setC \!\!\: A_n$, spanned by diagrams in $A_n$,
  modulo the subspace spanned by the 4T relations
  \begin{align}
    \label{eq:4Ta}
    \rb{\rrlen}{\ig[width=\rwlen]{fig/4T1}} \; - \;
    \rb{\rrlen}{\ig[width=\rwlen]{fig/4T2}} \; + \;
    \rb{\rrlen}{\ig[width=\rwlen]{fig/4T4}} \; - \;
    \rb{\rrlen}{\ig[width=\rwlen]{fig/4T3}} \; = \; 0.
  \end{align}
  The space of unframed chord diagrams is the vector space $\cda =
  \fcda / (\Theta)$, where $(\Theta)$ is the subspace spanned by the
  1T relations,
  \begin{align}
    \label{eq:1Ta}
    \rb{\rrlen}{\ig[width=\rwlen]{fig/1T}} = 0.
  \end{align}
\end{definition}
The space $\fcda = \bigoplus_n \fcda_n$ has a natural graded
multiplication given by connected sum of chord diagrams, which is only
well-defined up to the 4T relation. With this multiplication, the space
$(\Theta)$ above is exactly the ideal generated by the chord diagram
$\Theta$ with a single chord. In addition, the algebra $\fcda$ has
a graded coproduct given by
\begin{align}
  \label{eq:4}
  \Delta(D) = \sum_{J \subseteq \{D\}} D_J \otimes D_{\bar J},
\end{align}
where the sum is over all subsets $J$ of chords in $D$, and $D_J$ only
has chords of $J$, whereas $D_{\bar J}$ only has the complementary
set of chords. This gives $\fcda$ the structure of a Hopf algebra.

The dual of $\fcda_n$ is clearly just the space of weight systems
$\fweights_n = \Hom(\fcda_n, \setC)$, and through this duality 
the Hopf algebra structure on $\fcda$ induces a dual structure of the above
type on $\fweights = \bigoplus_n\! \fweights_n$. The
isomorphisms in \eqref{eq:3} respect the algebraic structure in the
sense that $\fweights$ is isomorphic to the associated graded of the filtered
bialgebra $\fvassinv$,
\begin{align*}
  \overline \sigma^\ssf \, \colon \bigoplus \fvassinv_n /
  \fvassinv_{n-1} \stackrel{\!\sim}{\longrightarrow} \bigoplus \fweights_n.
\end{align*}

Through the duality between $\fcda_n$ and $\fweights_n$, the map $J^\ssf_n
\colon \fweights_n \to \fvassinv_n$ of \Fref{thm:3} is equivalent to a
map
\begin{align*}
  I^\ssf_n \colon \fknots \to \fcda_n, \qquad \qquad J^\ssf_n(w)(K) =
  w(I^\ssf_n (K)). 
\end{align*}
Actually, the proof of \Fref{thm:3} constructs this map $I^\ssf_n$
rather than $J^\ssf_n$. Since $J^\ssf_n$ takes values in
$\fvassinv_n$, the map $I^\ssf_n$ is a Vassiliev invariant of order $n$,
with values in $\fcda_n$. The maps $I^\ssf_n$ are
usually collected in a single map, called the \emph{universal Vassiliev
  invariant},
\begin{align*}
  I^\ssf \colon \fknots \to \fcdac,
\end{align*}
where $\fcdac = \prod_n \!\fcda_n$ is the graded completion of the
algebra $\fcda$ of chord diagrams. Appropriately normalized, this map
is actually multiplicative with respect to the connected sum of knots
(see \cite{MR2962302}). Moreover, the coefficients of $I^\ssf$ are in
fact rational \cite{MR1394520}.

By a standard symbol calculus
argument, it is easy to show that any Vassiliev invariant factors
through the universal one.
\begin{proposition}
  \label{prop:1}
  For any $v \in \fvassinv$, there is a unique $w \in \fweights$ such
  that $v = w \circ I^\ssf$.
\end{proposition}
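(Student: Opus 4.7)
The plan is a standard symbol-calculus induction on the order of $v$, using \Fref{thm:3} as the key tool at each step.

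For existence, let $v \in \fvassinv_N$. Set $w_N := \sigma^\ssf_N(v) \in \fweights_N$ and consider the difference $v - J^\ssf_N(w_N)$. Since $\sigma^\ssf_N \circ J^\ssf_N = \Id$ by \Fref{thm:3}, this difference has vanishing order-$N$ symbol and therefore lies in $\ker \sigma^\ssf_N = \fvassinv_{N-1}$. Iterating the peeling procedure produces weight systems $w_N, w_{N-1}, \ldots, w_0$ with $v = \sum_{n=0}^N J^\ssf_n(w_n)$; extending by $w_n = 0$ for $n > N$ yields an element $w \in \fweights = \bigoplus_n \fweights_n$. Since $I^\ssf_n$ is defined precisely so that $J^\ssf_n(w_n)(K) = w_n(I^\ssf_n(K))$, summing over $n$ gives $v = w \circ I^\ssf$.

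For uniqueness, suppose $w \circ I^\ssf = 0$ for some nonzero $w \in \fweights$, and let $N$ be its largest nonzero index. Then $w \circ I^\ssf = \sum_{n \leq N} J^\ssf_n(w_n)$ is a Vassiliev invariant of order at most $N$, and each summand with $n < N$ has order at most $N-1$, hence vanishing symbol in degree $N$. Applying $\sigma^\ssf_N$ to the identity $0 = w \circ I^\ssf$ and invoking $\sigma^\ssf_N \circ J^\ssf_N = \Id$ once more yields $w_N = 0$, a contradiction.

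The argument is almost entirely formal; the only substantive input is \Fref{thm:3}, which provides the canonical splitting of the symbol map at each order and thereby drives the induction. The main point to check is the compatibility $J^\ssf_n(w)(K) = w(I^\ssf_n(K))$, but this is precisely the dualization used to define $I^\ssf_n$ from $J^\ssf_n$ in the excerpt, so no genuine obstacle arises.
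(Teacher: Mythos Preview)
Your argument is correct and is precisely the ``standard symbol calculus argument'' the paper invokes without writing out. The paper gives no further detail, so there is nothing to compare beyond noting that you have spelled out what was left implicit.
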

In this sense, the universal Vassiliev invariant allows us to speak
about the lower order symbols of any Vassiliev invariant in
$\fvassinv$. Moreover, applying the maps $J^\ssf_n$ to this total
symbol, any Vassiliev invariant can be uniquely written as
\begin{align*}
  v = v^c_n + \cdots + v^c_1 + v^c_0,
\end{align*}
where $v^c_i \in \fvassinv_i$ are canonical Vassiliev invariants. In
fact, this establishes a bijective correspondence between the subspace of
$\fvassinvc$ consisting of canonical power series Vassiliev invariants
and weight systems in the graded completion $\fweightsc = \prod_n
\!\fweights_n$.

\chapter{Constructing Weight Systems}
\label{cha:constr-weight-syst}

In this section, we shall use the geometry of pseudo-Riemannian symmetric
spaces to construct framed weight systems on chord diagrams. Through
the Kontsevich integral, these will in turn give rise to Vassiliev
invariants. As we shall see, the construction does not yield new weight
systems, but rather weights of the familiar Lie algebra type.

We start by describing a simple general recipe for constructing weight
systems.  Suppose that $V$ is a finite-dimensional vector space and
consider a symmetric tensor,
\begin{align*}
  H \in \End(V) \otimes \End(V).
\end{align*}
Symmetry means that $H$ is invariant under the involution that
interchanges the two copies of the endomorphism space, or simply
$H^{bd}_{ac} = H^{db}_{ca}$ in index notation. Such a tensor can be
used to construct a function $w_H \colon A \to \setC$ on chord
diagrams. The value on a particular chord diagram $D \in A_n$ is given
by a full contraction of $n$ copies of $H$ as prescribed by the chords
of $D$. This can be explained using diagram by representing
the tensor $H$ in the following graphical way
\begin{align}
  \label{eq:5}
  H^{bd}_{ac} \; = \; \rb{-7.7mm}{\ig[height=19mm]{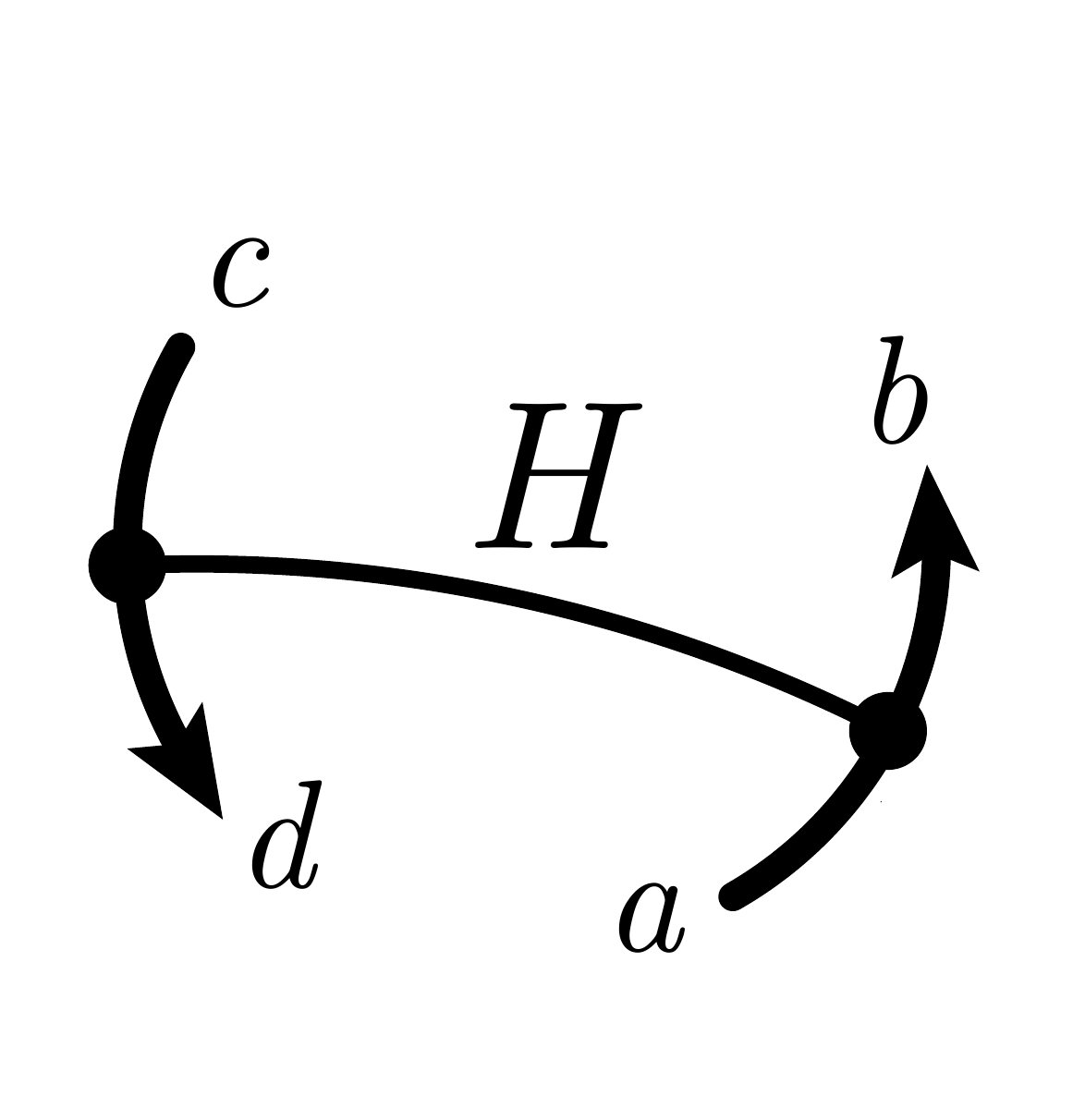}}
  \qquad \qquad 
  \text{or simply} \qquad \qquad H \; = \;
  \rb{-7.7mm}{\ig[height=19mm]{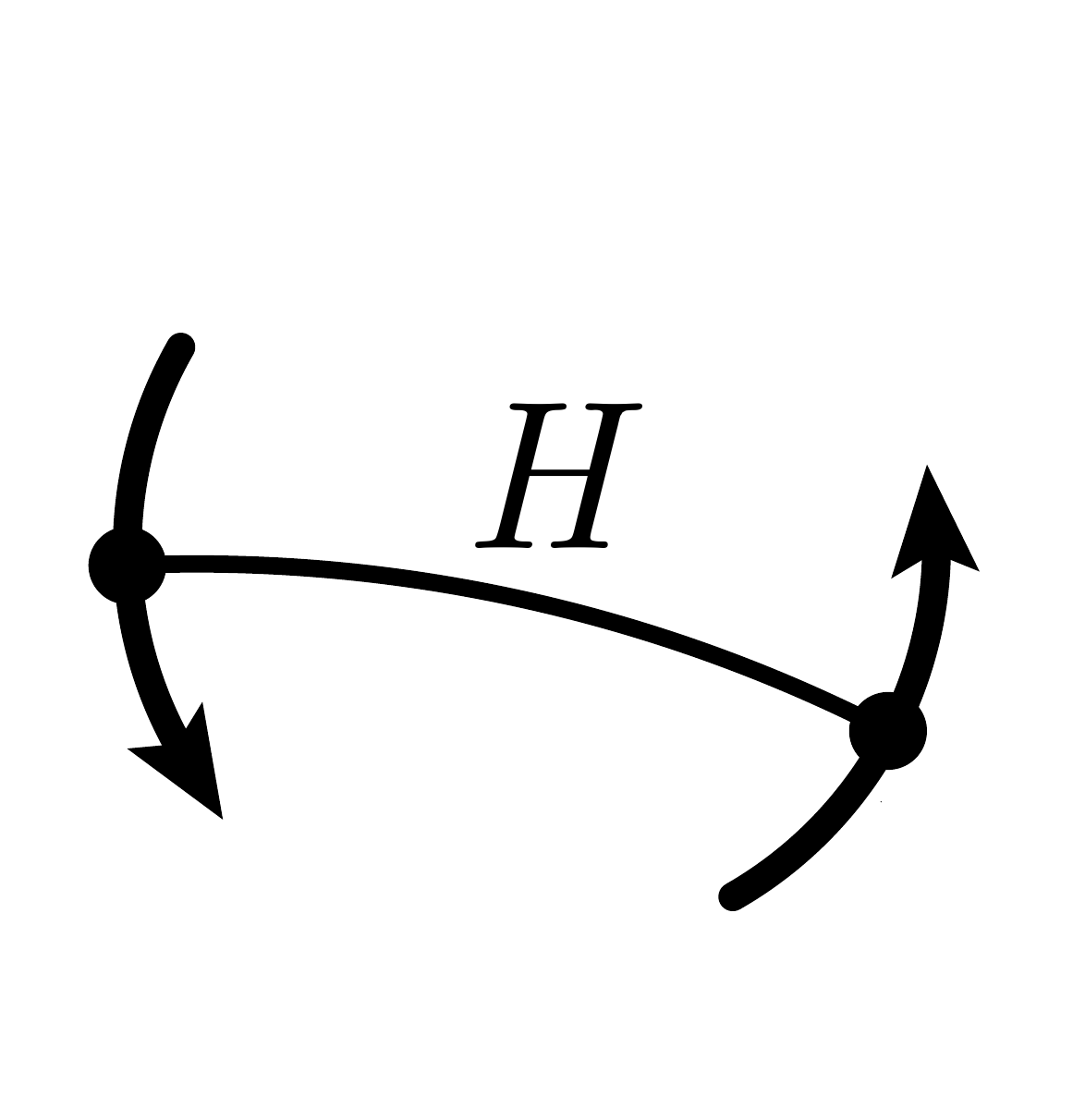}}. 
\end{align}
Given a chord diagram, we view it as being glued from such pieces, and
contract the tensors accordingly. For example, we have
\begin{align*}
  w_H (\rb{\rrlen}{\ig[width=\rwlen*105/100]{fig/HChord}}) \quad  = \quad
  \rb{-9.9mm}{
    \begin{picture}(52,62)(0,0)
      \put(0,0){\ig[height=22mm]{fig/RContract}} 
      \put(23,33){\mbox{$\scriptstyle H$}}
      \put(38,37){\mbox{$\scriptstyle H$}}
      \put(8,21){\mbox{$\scriptstyle H$}}
    \end{picture}
  } 
  \quad = \quad
  H^{bf}_{ae}H^{ad}_{cf}H^{ce}_{bd}, 
\end{align*}
where repeated indices denote contraction of tensor entries, or simply
Einstein summation if we think of the indices as indexing a basis of
the vector space $V$. The symmetry of the tensor $H$ ensures that the
diagrammatic representation is well-defined.

Using the following graphical representation, the map $w_H$ is even
more naturally expressed on singular knots, where the tensor $H$ can
be placed at every double point and contracted as prescribed by the knot,
\begin{align}
  \label{eq:7}
  H \;\; = \;\; \rb{-4.25mm}{\ig[height=11mm]{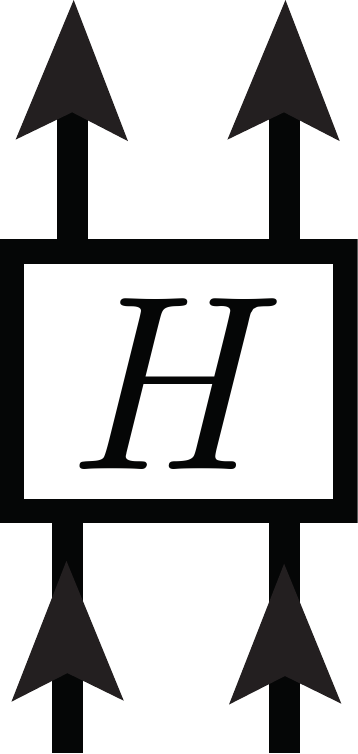}} \qquad \qquad
  \text{and} \qquad \qquad w_H \left
    (\rb{-7.5mm}{\ig[height=17mm]{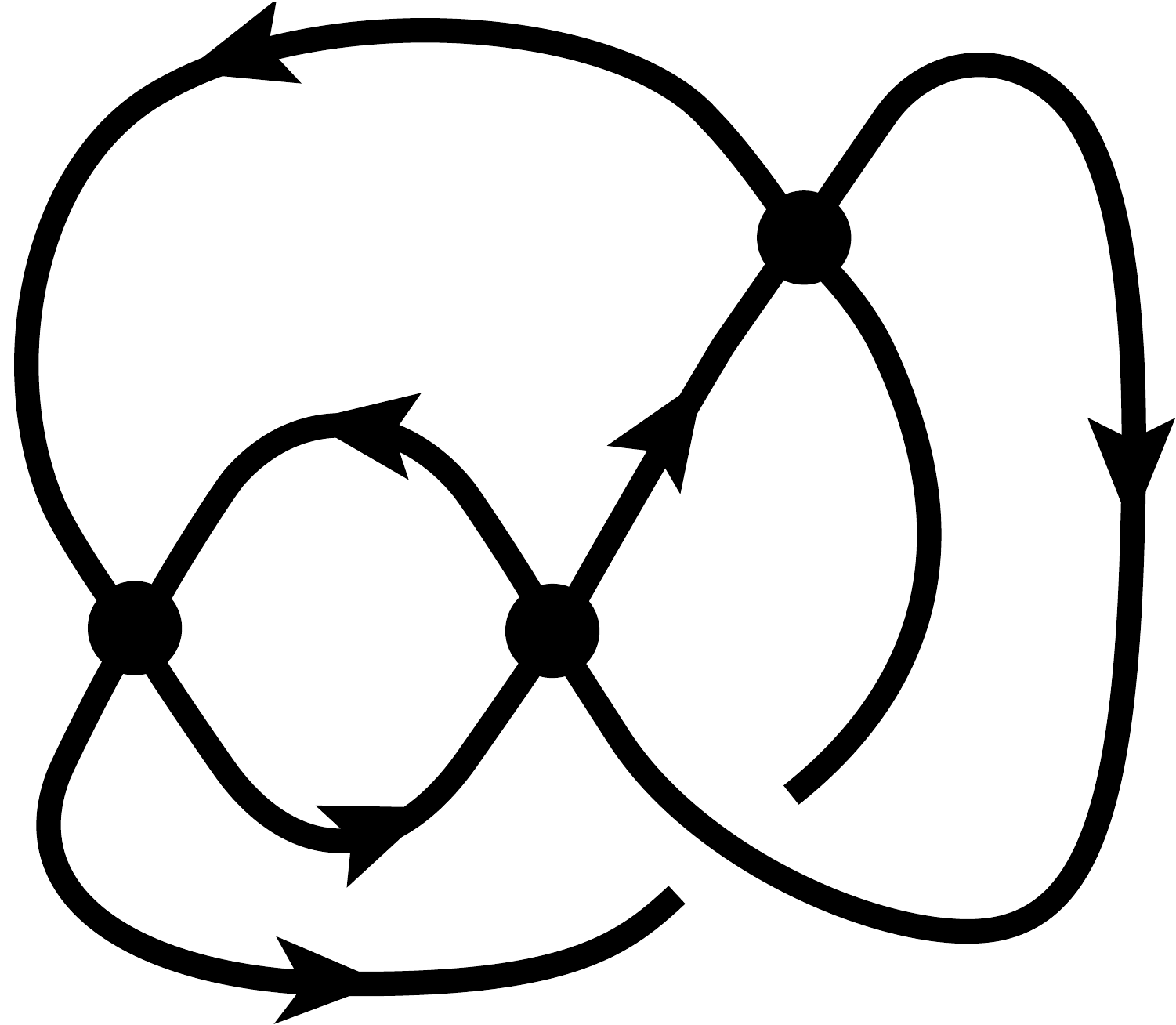}} \right) \quad = \quad
  \rb{-7.5mm}{\ig[height=17mm]{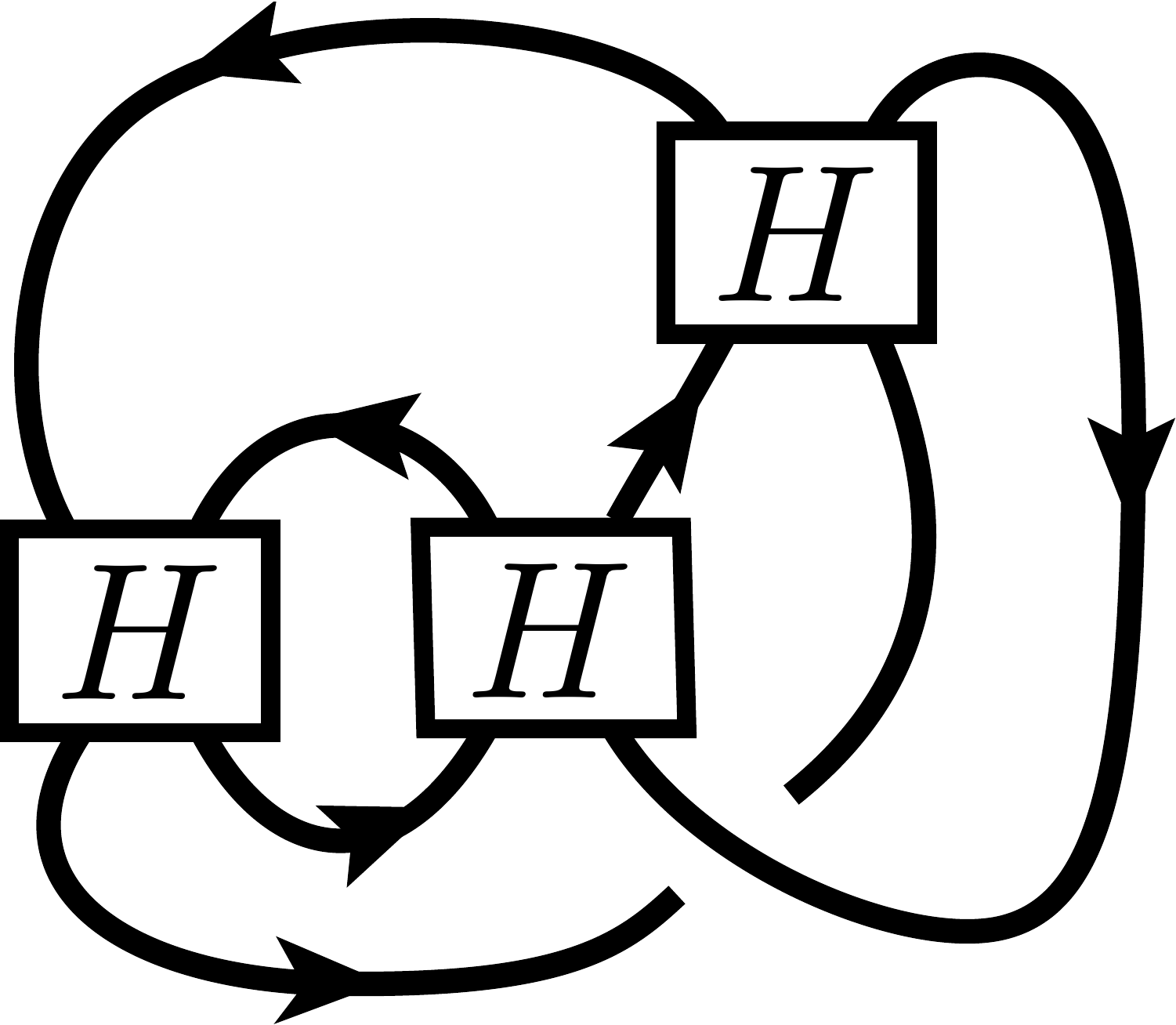}}.
\end{align}
For a general tensor $H$, the function $w_H$ will not necessarily
satisfy the 4T relation and define a weight system.

\begin{definition}
  \label{def:3}
  A \emph{weight tensor} is a symmetric element $H \in \End(V)
  \otimes \End(V)$ which satisfies the 4T relation
  \begin{align}
    \label{eq:6}
    H^{fx}_{ea}H^{bd}_{xc} - H^{fb}_{ex} H^{xd}_{ac} +
    H^{fx}_{ec}H^{bd}_{ax} - H^{fd}_{ex}H^{bx}_{ac}
    = 0     
  \end{align}
  as an element in $\End(V) \otimes \End(V) \otimes \End(V)$.
\end{definition}
\noindent In terms of the graphical representation in \eqref{eq:5},
the 4T relation corresponds to
\begin{align}
  \label{eq:4TH}
  \rb{\rrlen*110/100}{\ig[width=\rwlen*110/100]{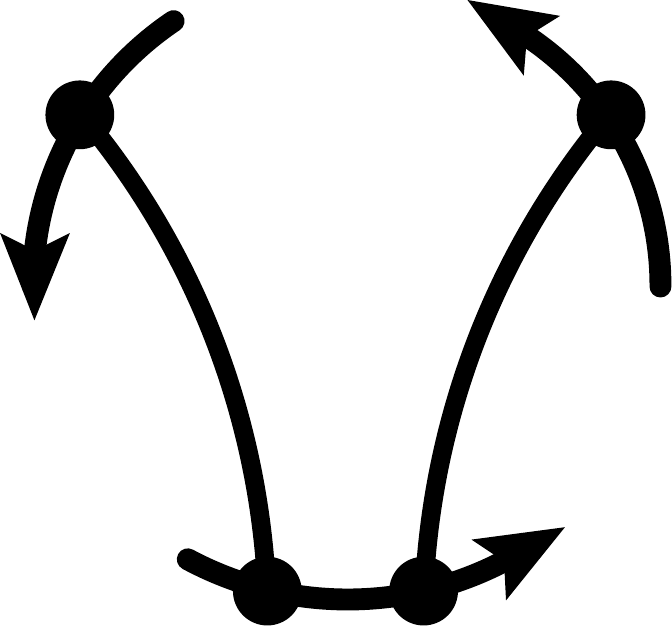}} \; - \;
  \rb{\rrlen*110/100}{\ig[width=\rwlen*110/100]{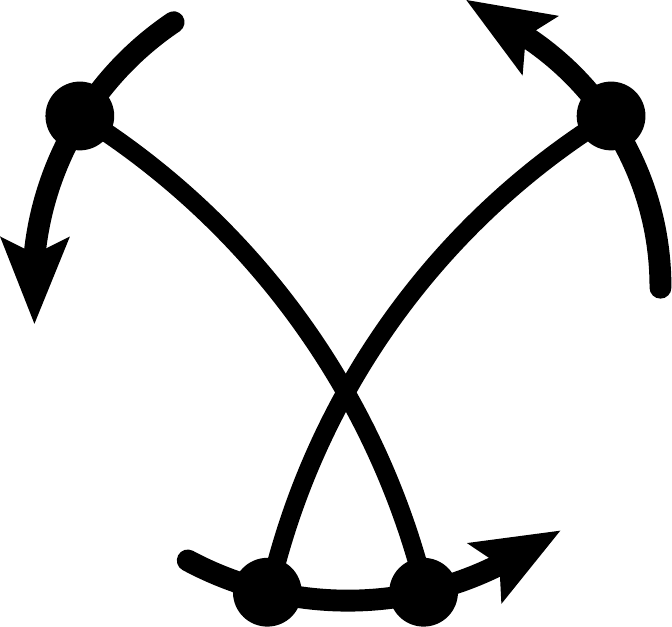}} \; + \;
  \rb{\rrlen*110/100}{\ig[width=\rwlen*110/100]{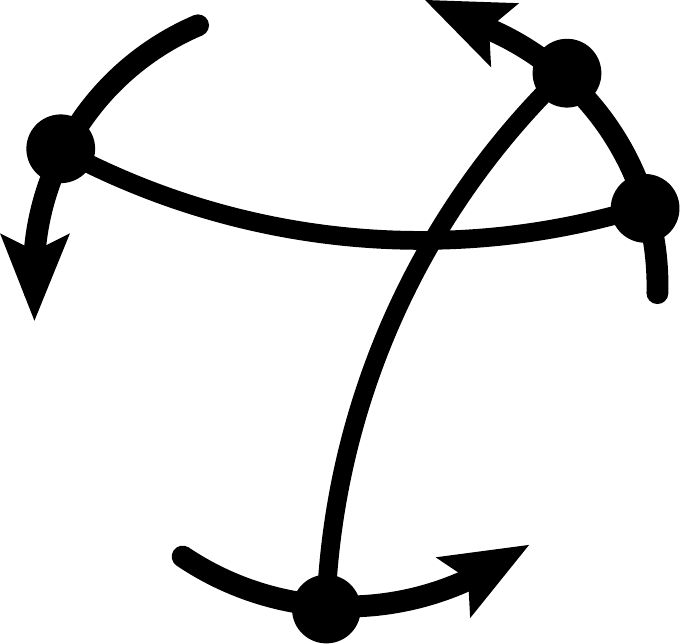}} \; - \;
  \rb{\rrlen*110/100}{\ig[width=\rwlen*110/100]{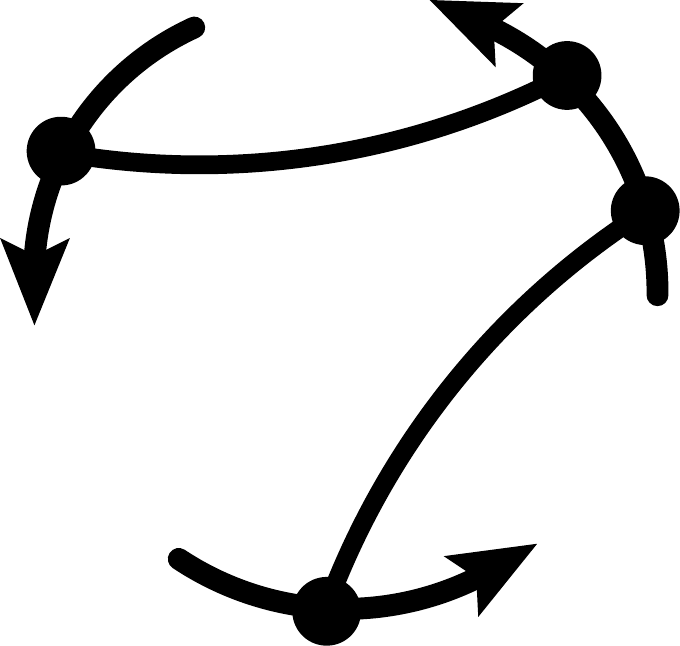}} \; = \; 0.
\end{align}
The point of this definition is, of course, the following obvious statement:

\begin{proposition}
  \label{prop:3}
  Any weight tensor $H$ defines a framed weight system $w_H \colon
  \fcda \to \setC$.
\end{proposition}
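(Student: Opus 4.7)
The claim is essentially definitional once one has set up the graphical calculus, so the proof amounts to checking that the tensor 4T relation \eqref{eq:6} translates into the chord-diagram 4T relation \eqref{eq:4Tw} under the contraction recipe defining $w_H$.

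The plan is to fix a 4T combination of chord diagrams, as displayed in \eqref{eq:4TH}, and show that $w_H$ annihilates it. The four diagrams appearing in the relation are identical outside a small local region containing two short arcs of the underlying circle and three chord endpoints; only the way in which two chords are attached in that local region varies. Let $D_1, D_2, D_3, D_4$ denote the four diagrams. Outside the local region, each $D_i$ has the same chord structure, so when we assemble the full contraction prescribed by $w_H$, the contributions of all chords lying entirely outside the region, together with those chord-ends that connect into the region from outside, produce the same tensor $T$ with six open index slots (one pair for each of the three endpoints that terminate in the local region).

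Next I would compute, term by term, the local contribution of each $D_i$. By the graphical convention \eqref{eq:5}, each of the two chords visible in the local picture contributes one copy of $H$ whose four indices are divided between the two endpoints of the chord. Reading off the local configurations shown in \eqref{eq:4TH} gives exactly the four tensors
\[
H^{fx}_{ea}H^{bd}_{xc}, \quad H^{fb}_{ex} H^{xd}_{ac}, \quad H^{fx}_{ec}H^{bd}_{ax}, \quad H^{fd}_{ex}H^{bx}_{ac}
\]
in $\End(V)^{\otimes 3}$ (with the three endpoint slots labeled by the three pairs $(a,b),(c,d),(e,f)$). Contracting each with the common outer tensor $T$ yields $w_H(D_i)$, and so
\[
w_H(D_1) - w_H(D_2) + w_H(D_3) - w_H(D_4)
\]
equals the contraction of $T$ with the left-hand side of \eqref{eq:6}. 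By hypothesis on $H$, this left-hand side vanishes identically, hence so does the alternating sum.

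There is no real obstacle, but two small points warrant care. First, one must verify that the recipe $D \mapsto w_H(D)$ is well-defined independent of choices of presentation; this is precisely where the symmetry assumption $H^{bd}_{ac} = H^{db}_{ca}$ is used, since it renders the contraction indifferent to which of the two ``sides'' of a chord one labels as the first tensor factor, and equivalently to the ambient orientation of the disk at each chord endpoint. Second, one must confirm that the ``outer'' tensor $T$ really is common to all four terms; this follows because the 4T relation on chord diagrams is purely local and the outside arcs and chord endpoints match identically across $D_1,\dots,D_4$. With these routine checks in place, the proposition follows, and $w_H$ descends to a linear functional on $\fcda$.
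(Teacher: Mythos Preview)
Your argument is correct and is exactly the natural verification the paper has in mind; the paper itself offers no proof, stating the proposition as ``obvious'' immediately after introducing the graphical calculus \eqref{eq:4TH}. You have simply spelled out the local--global factorization and the role of the symmetry $H^{bd}_{ac}=H^{db}_{ca}$ that the paper leaves implicit.
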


\subsection*{Weight Systems from Metrized Lie Algebras}

There is a standard way of producing a framed weight system from a
representation of a metrized Lie algebra. The construction is due to
Bar-Natan \cite{MR1318886} and proceeds as follows.

Let $\frakg$ be a Lie algebra equipped with a non-degenerate bilinear
form $B\in \frakg^* \otimes \frakg^*$ which is invariant under the
adjoint action, i.e.,
\begin{align*}
  B( [z, x], y ) + B( x, [z, y] ) = 0, 
\end{align*}
for all $x,y,z \in \frakg$. Since $B$ is non-degenerate, its inverse
constitutes an element $C \in \frakg \otimes \frakg$, called the
Casimir tensor. If $$\rho \colon \frakg \to \End(V)$$ is a
representation of $\frakg$, then its tensor square can be applied to
$C$ to produce a tensor
\begin{align*}
  \rho(C) \in \End(V) \otimes \End(V).  
\end{align*}
This is a weight tensor. Indeed, if $Y \in \frakg^{\otimes 3}$ denotes
the totally anti-symmetric structure tensor of $\frakg$, given by
raising both indices on the Lie bracket using the metric, then
$\rho(C)$ satisfies the 4T relation because
\begin{align}
  \label{eq:15}
  \rho(C)^{xf}_{ae}\rho(C)^{bd}_{xc} -
  \rho(C)^{xd}_{ac}\rho(C)^{bf}_{xe} = \rho(Y)^{bdf}_{ace} =
  \rho(C)^{bx}_{ac}\rho(C)^{df}_{xe} - \rho(C)^{bd}_{ax}\rho(C)^{xf}_{ce}.
\end{align}
Graphically, this corresponds to
\begin{align}
  \label{eq:4TL}
  \rb{\rrlen*110/100}{\ig[width=\rwlen*110/100]{fig/4T1-H}} \; - \;
  \rb{\rrlen*110/100}{\ig[width=\rwlen*110/100]{fig/4T2-H}} \;\; = \;\;
  \rb{\rrlen*110/100}{\ig[width=\rwlen*110/100]{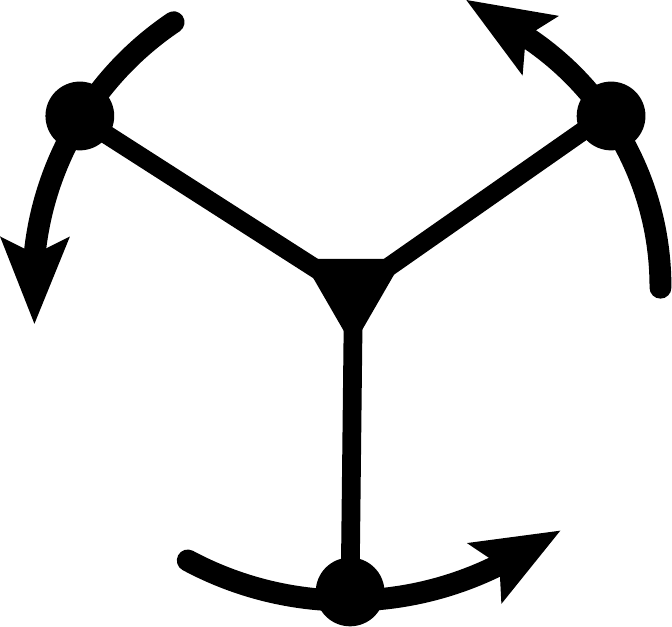}} \;\; = \;\;  
  \rb{\rrlen*110/100}{\ig[width=\rwlen*110/100]{fig/4T3-H}} \; - \;
  \rb{\rrlen*110/100}{\ig[width=\rwlen*110/100]{fig/4T4-H}}
\end{align}
where the trivalent vertex in the middle represents
the trivector $Y \in \frakg^{\otimes 3}$. From \Fref{prop:3}, we
immediately get the following:

\begin{proposition}
  \label{prop:4}
  Any representation $\rho \colon \frakg \to \End(V)$ of a metrized
  Lie algebra defines a framed weight system $w_{\rho} \colon \fcda
  \colon \to \setC$.
\end{proposition}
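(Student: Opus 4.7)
The plan is to verify that the tensor $H = \rho(C) \in \End(V) \otimes \End(V)$ is a weight tensor in the sense of \Fref{def:3} and then invoke \Fref{prop:3}. Two conditions must be checked: symmetry and the 4T relation \eqref{eq:6}. The symmetry of $\rho(C)$ is immediate from the symmetry of the Casimir tensor $C \in \frakg \otimes \frakg$, which in turn holds because $C$ is the inverse of the symmetric bilinear form $B$; applying $\rho \otimes \rho$ preserves this symmetry on the nose.

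The 4T relation is the content of the identity \eqref{eq:15}, which should be verified directly. I would expand $\rho(C) = \sum_\alpha \rho(e_\alpha) \otimes \rho(e^\alpha)$ in a pair of dual bases of $\frakg$, evaluate each product of two copies of $\rho(C)$ by composing endomorphisms on the slot carrying the contracted index, and then use the representation property $\rho(e_\alpha)\rho(e_\beta) - \rho(e_\beta)\rho(e_\alpha) = \rho([e_\alpha, e_\beta])$ to convert each difference of adjacent terms into a contraction with the structure tensor. The structural fact that makes the left pair and the right pair of \eqref{eq:6} both reduce to the same trivector $\rho(Y)$ is that $\ad$-invariance of $B$ is equivalent to total antisymmetry of $Y \in \frakg^{\otimes 3}$; this antisymmetry is what makes the signs match so that the four terms of \eqref{eq:6} cancel in pairs.

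With these two properties established, \Fref{prop:3} immediately delivers the framed weight system $w_\rho := w_{\rho(C)}$. The main obstacle is the careful index bookkeeping in \eqref{eq:15}, which is slightly delicate because one must keep track of which factor of $\End(V)^{\otimes 3}$ each upper/lower index belongs to and recognize that the relabeling of dummy basis indices is justified precisely by the symmetry of $C$. This is a standard computation going back to Bar-Natan's \cite{MR1318886} original construction, and is essentially already carried out graphically in \eqref{eq:4TL}.
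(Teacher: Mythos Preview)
Your proposal is correct and follows essentially the same approach as the paper: show that $\rho(C)$ is a weight tensor by checking symmetry (inherited from $C$) and the 4T relation via the identity \eqref{eq:15}, then apply \Fref{prop:3}. The paper's argument is exactly this, with \eqref{eq:15} stated and its graphical form \eqref{eq:4TL} displayed rather than derived in detail; your expansion in dual bases and invocation of the total antisymmetry of $Y$ simply fills in the routine verification the paper leaves implicit.
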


The weight systems obtained in this way are said to be of Lie algebra
type. A more complicated way of obtaining a weight system from a Lie
algebra representation is through the corresponding quantum
invariant. By the result of Birman and Lin \cite{MR1198809}, the
quantum invariant defined from a representation of a Lie algebra
$\frakg$ gives rise to a power series Vassiliev invariant through the
substitution $q = e^h$. Conveniently, the weight system of this power
series invariant matches the weight system constructed directly from
the Lie algebra representation \cite{MR1321294}.

\begin{example}
  \label{exa:4}
  The Lie algebra $\sl_2$ has the standard generators
  \begin{align*}
    H = \begin{pmatrix} 1 & 0 \\ 0 & -1 \end{pmatrix}
    \qquad \quad
    E = \begin{pmatrix} 0 & 1 \\ 0 & 0 \end{pmatrix}
    \qquad \quad 
    F = \begin{pmatrix} 0 & 0 \\ 1 & 0 \end{pmatrix},
  \end{align*}
  and the Casimir tensor of the non-degenerate form $B(x,y) = \Tr(xy)$
  is given by
  \begin{align}
    \label{eq:14}
    C = \frac{1}{2} H \otimes H + E \otimes F + F \otimes E.
  \end{align}
  The weight tensor associated with the standard representation is of
  course given by the same expression, and it agrees with the symbol
  of the power series Vassiliev invariant coming from the quantum
  invariant $\qinv^\ssf_{\sl_2}$, as discussed in \Fref{exa:3}. A
  combinatorial description of this weight system, along the lines of
  \Fref{exa:8}, is given in \cite{MR2962302}.
\end{example}

\begin{example}
  \label{exa:9}
  The weight system arising from the standard representation of the
  Lie algebra $\so_3$, with non-degenerate and invariant bilinear form
  $B(x,y) = \frac{1}{2} \Tr(xy)$, is equal to the weight system of the
  Yamada polynomial, described in \Fref{exa:6} and \Fref{exa:8} (see
  \cite{MR1318886,MR2962302}). This is in agreement with Birman and
  Lin \cite{MR1198809} and the fact that the Yamada polynomial
  corresponds to the quantum invariant coming from $\so_3$ with its
  standard representation \cite{MR1854694}.
\end{example}

\subsection{Weight Systems from Symmetric Spaces}

We shall see that pseudo-Riemannian symmetric spaces give rise to a framed
weight system in the simplest way possible: the curvature defines a
weight tensor. 

Let $(M, g)$ denote a pseudo-Riemannian manifold, with
Levi-Civita connection $\nabla$ and curvature $R$, given as usual by
\begin{align*}
  R(X,Y)Z = \nabla_X \nabla_Y Z - \nabla_Y \nabla_X Z - \nabla_{[X,Y]} Z,
\end{align*}
for any vector fields $X,Y,Z$ on $M$.

\begin{definition}
  \label{def:4}
  A (locally) \emph{symmetric space} is a connected pseudo-Riemannian
  manifold $(M, g)$ with parallel curvature tensor, that is $\nabla R = 0$.
\end{definition}

The justification of this name lies in the standard fact that the curvature
is parallel if and only if the geodesic reflections are local
isometries. A special case is when these reflections extend to global
isometries of the manifold.

\begin{definition}
  \label{def:5}
  A \emph{globally symmetric space} is a pseudo-Riemannian manifold $(M,
  g)$ such that every point $p \in M$ is an isolated fixed point of an
  involutive isometry $s_p$.
\end{definition}

In addition to the obvious
antisymmetry $R^d_{abc} = - R^d_{bac}$, the curvature tensor satisfies the algebraic
Bianchi identity
\begin{align*}
  R^d_{abc} +  R^d_{bca} + R^d_{cab} = 0,
\end{align*}
as well as the differential Bianchi identity on the covariant
derivative of $R$, which is vacuous in the case of symmetric spaces
since the curvature is parallel.

To expose the remaining symmetries of the curvature, it is common to
lower the index and consider the fully covariant curvature tensor
\begin{align*}
  \check R_{abcd} = R^x_{abc} g_{xd} 
  \qquad \qquad \text{which satisfies}  \qquad \qquad 
  \check R_{abcd} = \check R_{cdab}. 
\end{align*}
We shall, however, be interested in the tensor $\hat R \in C^\infty(M,
\End(TM) \otimes \End(TM))$ given by raising an index of $R$,
\begin{align*}
  \hat R^{bd}_{ac} = g^{bx} R^d_{axc}.
\end{align*}
The symmetry of $\check R$ translates to the symmetry $\hat
R^{bd}_{ac} = \hat R^{db}_{ca}$, so $\hat R$ defines a symmetric
section of $\End(TM) \otimes \End(TM)$. Moreover, we have the
following simple but crucial result, which is the main
observation of the paper.

\begin{proposition}
  \label{prop:2}
  The curvature $\hat R$ of a locally symmetric space
  is a weight tensor.
\end{proposition}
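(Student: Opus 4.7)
The plan is to derive the 4T relation for $\hat R$ directly from the parallelism $\nabla R = 0$ via the Ricci identity applied to $\hat R$ itself. Since the Levi-Civita connection is metric-compatible, $\nabla g = 0$, and so $\nabla R = 0$ immediately yields $\nabla \hat R = 0$. The symmetry $\hat R^{bd}_{ac} = \hat R^{db}_{ca}$ required by \Fref{def:3} is already noted just before the statement and follows from $\check R_{abcd} = \check R_{cdab}$, independently of the symmetric space hypothesis, so the entire content of the proposition lies in verifying the 4T relation.

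The main step is to apply the Ricci identity to the $(2,2)$-tensor $\hat R$. For any indices $e,f$ the endomorphism $R(e_e, e_f) \in \End(TM)$ acts on $\hat R$ as a derivation, contributing one term per index position, so that
\begin{align*}
  [\nabla_e, \nabla_f] \hat R^{bd}_{ac} = R^b_{efx} \hat R^{xd}_{ac} + R^d_{efx} \hat R^{bx}_{ac} - R^x_{efa} \hat R^{bd}_{xc} - R^x_{efc} \hat R^{bd}_{ax}.
\end{align*}
Since $\nabla \hat R = 0$, the left-hand side vanishes, producing a four-term identity with one $R$ and one $\hat R$ in each summand.

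The final step converts the remaining $R$ into an $\hat R$ by raising the antisymmetric index $f$ with the metric, i.e., multiplying through by $g^{fy}$. By the very definition $\hat R^{BD}_{AC} = g^{BX} R^D_{AXC}$, each factor $g^{fy} R^\bullet_{ef\bullet}$ becomes $\hat R^{y\bullet}_{e\bullet}$. Relabeling $y$ back to $f$ and multiplying by $-1$, the four terms match the 4T relation of \Fref{def:3} verbatim. No serious obstacle is anticipated; the proof is essentially index bookkeeping, and the conceptual content is transparent: the 4T relation for $\hat R$ is nothing more than parallelism of the curvature, repackaged via the Ricci identity as the statement that $R(X,Y)$ acts as zero on the tensor $\hat R$.
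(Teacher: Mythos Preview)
Your proof is correct and essentially identical to the paper's. The only cosmetic difference is that the paper applies the Ricci identity to the $(1,3)$-tensor $R^d_{abc}$ and then raises the two indices $b$ and $f$, whereas you apply it directly to the $(2,2)$-tensor $\hat R^{bd}_{ac}$ and raise only $f$; since the metric is parallel these two computations are literally the same identity.
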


\begin{proof}
  We must verify the 4T relation \eqref{eq:6}. Since the curvature
  tensor is parallel, we have
  \begin{align}
    0 
    &= \nabla_f \nabla_e R^d_{abc} - \nabla_e \nabla_f R^d_{abc}
    \nonumber \\ 
    \label{eq:10}
    &= R^x_{efa} R^d_{xbc} + R^x_{efb} R^d_{axc} 
    + R^x_{efc} R^d_{abx} - R^d_{efx} R^x_{abc}.
  \end{align}
  Raising the indices $b$ and $f$ with the metric, we obtain 
  \begin{align*}
    0 &= \hat R^{fx}_{ea} \hat
    R^{bd}_{xc} - \hat R^{fb}_{ex} \hat R^{xd}_{ac} + \hat R^{fx}_{ec}
    \hat R^{bd}_{ax} - \hat R^{fd}_{ex} \hat R^{bx}_{ac},
  \end{align*}
  which is the desired identity.
\end{proof}

The main theorem follows as an immediate corollary of the above
proposition.

\begin{theorem}
  \label{thm:5}
  The curvature $\hat R$ of a locally symmetric space defines a framed
  weight system $w_R \colon \fcda \to \setR$.
\end{theorem}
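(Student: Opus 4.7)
The plan is to derive the theorem immediately from the two preceding propositions. First I would observe that \Fref{prop:2} shows that at every point $p \in M$, the tensor $\hat R_p \in \End(T_pM) \otimes \End(T_pM)$ is a weight tensor in the sense of \Fref{def:3}. Then I would apply \Fref{prop:3} to conclude that $\hat R_p$ defines a framed weight system $w_{\hat R_p} \colon \fcda \to \setR$, where the codomain is $\setR$ rather than $\setC$ because $\hat R_p$ is a real tensor on a real vector space and the construction $w_H$ of \eqref{eq:7} proceeds entirely by metric contractions of copies of the tensor.

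The only thing that needs a brief check is that the weight system obtained does not depend on the choice of basepoint $p$, so that one really gets a single well-defined $w_R$ rather than a family parameterized by $M$. This is where I would use that $(M,g)$ is locally symmetric: since $\nabla \hat R = 0$, parallel transport along any smooth path from $p$ to $q$ produces a linear isometry $T_pM \to T_qM$ that identifies $\hat R_p$ with $\hat R_q$. Because the value $w_{\hat R_p}(D)$ is computed by metric contractions alone, it is invariant under such isometries, hence equal to $w_{\hat R_q}(D)$. Connectedness of $M$, which is built into \Fref{def:4}, guarantees that any two points can be joined by a path, so $p \mapsto w_{\hat R_p}$ is constant.

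There is no real obstacle here, since the substantive content sits in \Fref{prop:2}, which encodes the 4T relation as an identity forced by $\nabla \hat R = 0$ together with the Ricci identity. Thus the proof of \Fref{thm:5} will be a short two-line invocation of \Fref{prop:2} followed by \Fref{prop:3}, plus the parallel-transport remark to pin down a single framed weight system $w_R$ taking values in $\setR$.
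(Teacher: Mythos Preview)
Your proposal is correct and follows essentially the same approach as the paper: invoke \Fref{prop:2} pointwise, then (implicitly via \Fref{prop:3}) obtain a weight system at each point, and finally use that $R$ and $g$ are parallel to conclude basepoint-independence. The only cosmetic difference is that the paper phrases the last step infinitesimally---$w_R(D)$ is a full contraction of parallel tensors, hence has vanishing derivative and is therefore constant on the connected manifold---whereas you phrase it via parallel transport; these are equivalent formulations of the same fact.
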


\begin{proof}
  By \Fref{prop:2}, the curvature tensor $\hat R$ satisfies the
  4T relation on the tangent space at every point on the symmetric
  space $M$. This means that $\hat R$ defines
  a map $w_R \colon \fcda \to C^\infty(M)$. For a given chord diagram
  $D \in \fcda_n$, the function $w_R(D)$ is given by a full
  contraction of the curvature and the metric, both of which are
  parallel. This means that the derivative of $w_R(D)$ must vanishes
  so that $w_R$ takes values in constant functions.
\end{proof}

Through these weight systems and the universal Vassiliev invariant,
any symmetric space gives rise to a power series Vassiliev invariant.

\begin{remark}
  \label{rem:1}
  In fact, the proof of \Fref{prop:2} does not require the curvature
  to be parallel, but only that its covariant derivative $\nabla R \in
  \Omega^1(M, (TM^{*})^{\otimes 3} \otimes TM)$ defines a closed
  one-form with respect to the covariant exterior derivative
  $d^{\nabla}$. It would be interesting to find examples of
  non-symmetric spaces of this type. The weight system coming from
  such a space would in general take values in functions on $M$, which
  could be integrated to produce numbers for compact spaces.
\end{remark}

In general, the weight systems coming from symmetric spaces will
detect the framing. Indeed, the value on the chord diagram with a
single chord will be given by the scalar curvature, so the weight
system will be framed unless the symmetric space is scalar flat. Since
any irreducible symmetric space is Einstein (\cite{MR2243772}, Theorem
57), it is only scalar flat if it is Ricci flat, in which case it is
flat altogether (\cite{MR867684}, Theorem 7.61).

\begin{example}
  \label{exa:5}
  The three-sphere $S^3 = \{x \in \setR^4 \mid \abs{x} = 1\}$, with
  the subspace metric $g$ induced from the standard Euclidean metric
  on $\setR^4$, is a simply connected Riemannian symmetric space. The
  sectional curvature is constant and equal to 1, so the curvature
  tensor is given by
  \begin{align}
    \label{eq:8}
    \check R_{abcd} = g_{ad}g_{bc} - g_{ac}g_{bd}
    \qquad \text{or equivalently} \qquad 
    \hat R^{bd}_{ac} = \idone^d_{a}\idone^b_{c} - g_{ac}g^{bd}.
  \end{align}
  This is easily seen to imply that the corresponding weight system is
  given by
  \begin{align*}
    w_R(D) = \sum(-1)^{\abs{s}}\, 3^{c(D_s)},
  \end{align*}
  which agrees with that of the Yamada polynomial described in
  \Fref{exa:8}. Indeed, the particular form of the curvature
  \eqref{eq:8} means that evaluating the corresponding weight system
  on a singular knot, as in \eqref{eq:7}, yields the signed sum over
  all ways of smoothing the double point horizontally and vertically
  and taking the trace of the identity endomorphism, which is equal to
  $3$, for each the resulting components.
\end{example}

It is no coincidence that the weight system for $S^3$ is of Lie
algebra type. As we shall see, this is in fact the case for any weight
system constructed from the curvature tensor of a symmetric space.

\chapter{Symmetric Spaces and Lie Algebras}
\label{cha:symmetric-spaces-lie}

Representations of Lie algebras enter the picture in a natural way for
globally symmetric spaces, which are homogeneous spaces for the action
of the isometry group. An isotropy subgroup has an infinitesimal
action of its Lie algebra on the tangent space at the fixed
point. Now, there are many ways of representing a homogeneous space as
a quotient $M = G/H$, and the connection between symmetric spaces and
Lie algebra weight systems is the fact that for such a representation
the curvature of the symmetric space can be calculated from the
bracket on the Lie algebra $\frakg$ of $G$ as 
\begin{align*}
  R(X,Y)Z = -[[X,Y],Z] \qquad \qquad \forall X,Y,Z \in T_p M \subset
  \frakh \oplus T_pM = \frakg\, .
\end{align*}

In general, however, the Lie algebra of $G$ is not metrized in any
natural way, and we need a metrized Lie algebra to produce weight
systems. It turns out that a more careful representation of the
symmetric space as homogeneous for the transvection group resolves
this issue. For positive-definite Riemannian symmetric spaces, the
transvection group coincides with the connected component of the
isometry group, but not in general. In this representation, the
stabilizer of a point is isomorphic to the holonomy group and its Lie
algebra carries a natural metric. The Lie algebra of the holonomy
group also has the advantage that it makes sense for locally symmetric
spaces and does not rely on the homogeneous description.

As we have seen, the weight system constructed from a symmetric space
is determined by the metric and the curvature on a single tangent
space, and does not depend on the global geometry of the symmetric
space. The Lie algebras of the transvection and holonomy groups can be
described directly from this linear data, avoiding the use of Lie
group theory for symmetric spaces. For the general theory of symmetric
spaces, the reader is referred to \cite{MR514561,MR2243772,MR2243012}
and in particular to \cite{MR556610,math/0301326,MR2577201} for the
specific theory used below.

Let $(M, g)$ be any locally symmetric space with
curvature $R$. Fix a base point $p \in M$, and denote the tangent
space at $p$ by $\frakp$. Let $\frakh$ be the subspace of
$\End(\frakp)$ defined by
\begin{align}
  \label{eq:9}
  \frakh = \spn \{ R(X,Y) \in \End(\frakp) \mid X,Y \in \frakp \}.
\end{align}
By the Ambrose-Singer theorem, $\frakh$ is in fact the Lie algebra of
the holonomy group at \nolinebreak[2] $p$. The Lie bracket is given by the usual
commutator of endomorphisms, which can be seen to preserve $\frakh$ by
rewriting \eqref{eq:10} as
\begin{align}
  \label{eq:11}
  [R(X,Y), R(Z,W)] = R(R(X,Y)Z,W) + R(Z,R(X,Y)W) 
\end{align}
for $X,Y,Z,W \in \frakp$. This can be viewed as a variant of the
4T relation.

Now, consider the sum $\frak g = \frakh \oplus \frakp$ and extend the
Lie bracket on $\frakh$ by
\begin{alignat*}{2}
  [X,Y] &= R(X,Y) & \qquad &\text{for} \quad X,Y \in \frakp \\
  [A,X] &= -[X,A] = A(X) & \qquad &\text{for} \quad A \in \frakh, X
  \in \frakp.
\end{alignat*}
This is clearly skew-symmetric. Moreover, it satisfies the Jacobi
identity, which reduces to \eqref{eq:11} for the case of $X,Y \in
\frakp$ and $A \in \frakh$, and to the Bianchi identity of $R$ for the
case of $X,Y,Z \in \frakp$. In other words, the bracket defines a Lie
algebra structure on  $\frakg$, and we have
\begin{align*}
  [\frakh, \frakh] \subset \frakh \qquad \qquad[\frakh, \frakp] \subset
  \frakp \qquad \qquad [\frakp, \frakp] = \frakh.
\end{align*}
Notice in particular that $\frakh$ equals $[\frakp, \frakp]$ by
definition, and that we get a representation of $\frakh$ on $\frakp$
by the adjoint action.

Using the splitting $\frakg = \frakh \oplus \frakp$, we define an
involution
\begin{align*}
  s \colon \frakg \to \frakg \qquad \qquad \text{by} \qquad \qquad s(A
  + X) = A - X \quad \text{for} \quad A \in \frakh, X \in \frakp. 
\end{align*}
The metric on $M$ defines a non-degenerate bilinear form $B_{\frakp}$
on $\frakp$, and this pairing is invariant under the adjoint action of
$\frakh$, by the symmetries of the curvature. Moreover, there is a
unique extension of $B_{\frakp}$ to $\frakg$ which is invariant under
the involution $s$ and the adjoint action of $\frakg$ on itself
\cite{MR556610}. By invariance under $s$, the summands $\frakh$ and
$\frakp$ must be orthogonal, and on $\frakh = [\frakp, \frakp]$ the
invariance under the adjoint action forces
\begin{align}
  \label{eq:13}
  B_{\frakh}([X,Y],[Z,W]) = B_{\frakp} ([[X,Y],Z],W) = \check R(X,Y,Z,W).
\end{align}
The extension $B = B_{\frakh} + B_{\frakp}$ is actually
non-degenerate. Indeed, if $A \in \frakh \subset \End(\frakp)$ and
\begin{align*}
  0 = B_{\frakh}(A, [X,Y]) = B_{\frakp}([A,X], Y) = B_{\frakp}(A(X),Y)
  \qquad \forall X,Y \in \frakp, 
\end{align*}
then $A$ must vanish by the non-degeneracy of $B_{\frakp}$.

In conclusion, we have a metrized Lie algebra $\frakh$ and a
representation $\rho \colon \frakh \to \End(\frakp)$ given by the
adjoint action on $\frakg$. The inverse of the non-degenerate metric
$B_{\frakh}$ defines an element $C_{\frakh} \in \frakh \otimes
\frakh$, and \eqref{eq:13} has the reformulation
\begin{align*}
  \rho(C_{\frakh}) = \hat R,
\end{align*}
as tensors in $\End(\frakp) \otimes \End(\frakp)$. This proves
the following:

\begin{theorem}
  \label{thm:6}
  Weight systems from symmetric spaces are of Lie algebra type.
\end{theorem}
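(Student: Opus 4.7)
The plan is to establish the stronger identity $\hat R = \rho(C_{\frakh})$ as elements of $\End(\frakp) \otimes \End(\frakp)$; combined with \Fref{prop:4}, this immediately identifies the weight system coming from the symmetric space with the Lie-algebra type weight system of $\rho$, thereby proving the theorem.

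The starting observation is that the representation $\rho \colon \frakh \to \End(\frakp)$ is simply the inclusion: for $A \in \frakh$ and $X \in \frakp$ the bracket defined above gives $\rho(A)(X) = [A, X] = A(X)$. To meaningfully compare $\hat R$ with $C_{\frakh}$ inside $\End(\frakp) \otimes \End(\frakp)$, I would next verify that $\hat R$ actually lies in the subspace $\frakh \otimes \frakh$. Contracting $\hat R$ on one factor against an arbitrary $T \in \End(\frakp)$ via the trace pairing yields the endomorphism $T^{ax} R(e_a, e_x)$ with $T^{ax} = T^a_b \, g^{bx}$, and this lies in $\frakh$ by the very definition \eqref{eq:9}; combined with the symmetry $\hat R^{bd}_{ac} = \hat R^{db}_{ca}$, this gives $\hat R \in \frakh \otimes \frakh$.

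To conclude $\hat R = C_{\frakh}$, I would invoke the characterizing property of the Casimir: $C_{\frakh}$ is the unique element of $\frakh \otimes \frakh$ such that, for every $T \in \End(\frakp)$ and every $B \in \frakh$, the double contraction against $T$ on the first factor (via the trace) and against $B$ on the second factor (via $B_{\frakh}$) yields $\Tr(TB)$; this is a routine consequence of the dual-basis identity $\sum_{\alpha} B_{\frakh}(h^{\alpha}, B) h_{\alpha} = B$. The same double contraction applied to $\hat R$, taking $B = R(Z, W)$ (which span $\frakh$), is converted via equation \eqref{eq:13} into $T^{ax} \check R(e_a, e_x, Z, W)$; a short manipulation using only $\check R_{pqax} = g_{xy} R^y_{pqa}$ and the raising of indices by $g$ then reduces this to $\Tr(T \cdot R(Z, W)) = \Tr(TB)$, matching the Casimir side.

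The only conceptual step is the identification $\hat R \in \frakh \otimes \frakh$, which is immediate from the definition of $\frakh$; everything else is essentially pure index bookkeeping, and I anticipate no substantive obstacle beyond keeping the conventions straight. In particular, the identification ultimately reduces to equation \eqref{eq:13}, which itself expresses the compatibility of $B_{\frakh}$ with the metric of the symmetric space.
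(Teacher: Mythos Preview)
Your proposal is correct and follows essentially the same approach as the paper: both arguments reduce to showing $\rho(C_{\frakh}) = \hat R$ via equation \eqref{eq:13}, and you have simply spelled out in detail (the inclusion $\hat R \in \frakh \otimes \frakh$ and the dual-basis characterization of the Casimir) what the paper states in one line as a ``reformulation'' of \eqref{eq:13}.
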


Theorem \ref{thm:6} explains the observation in \Fref{exa:5} above:

\begin{example}
  \label{exa:10}
  The round unit sphere $S^3$ has isometry group $O(4)$ with isotropy
  group $O(3)$. The transvection group is the connected component containing the
  identity, namely  $SO(4)$, and its isotropy subgroup $SO(3)$ gives the
  holonomy. Therefore, the holonomy Lie algebra is $\so_3$, which
  explains the equality of its weight system with that of the Yamada
  polynomial.
\end{example}

\subsection*{Realizing Weight Systems on Symmetric Spaces}

To understand which Lie algebra weight systems can be realized on
symmetric spaces, we shall introduce the following terminology
\cite{MR556610}.

\begin{definition}
  \label{def:6}
  A \emph{symmetric triple} $(\frakg, s, B)$ consists of 
  \begin{enumerate}\firmlist
  \item[-] a finite-dimensional real Lie algebra $\frakg$,
  \item[-] an involutive automorphism $s \colon \frakg \to \frakg$, and
  \item[-] a non-degenerate, symmetric bilinear form $B$, which is
    invariant in the sense that
    \begin{align*}
      \qquad B([x,y], z) = B(x,[y,z])  \qquad \text{and} \qquad B(sx, sy)
      = B(x,y) \qquad \quad \forall x,y,z \in \frakg.
    \end{align*}
  \end{enumerate}
  Furthermore, the decomposition $\frakg = \frakh \oplus \frakp$ into
  eigenspaces of $s$ must satisfy $\frakh = [\frakp, \frakp]$.
\end{definition}
\pagebreak[2]

As described above, any symmetric space defines a symmetric triple. On
the other hand, given a symmetric triple $(\frakg, s, B)$, one can
construct a globally symmetric space in the following way. Let $G$ be
a simply connected Lie group with Lie algebra $\frakg$, and let $S \in
\Aut(G)$ be the automorphism with derivative $s$. The space $H =
G^{S}\subset G$ of fixed points of $S$ is closed and connected, and therefore
the quotient manifold $M = G/H$ is simply connected by the long exact
homotopy sequence. The form $B_\frakp$ is invariant under the
adjoint action of $H$ on $\frakp$, and so it defines a $G$-invariant
metric on $M$. This gives $M$ the structure of a symmetric space, and
the associated symmetric triple is isomorphic to the original
$(\frakg, s, B)$. In fact, the constructions realize the following
correspondence \nolinebreak[2]\cite{MR556610}.

\begin{theorem}
  \label{thm:7}
  There is a bijective correspondence between symmetric triples and
  simply connected globally symmetric spaces.
\end{theorem}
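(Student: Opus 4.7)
The plan is to make precise the two constructions outlined in the paragraphs leading up to the theorem, and then to check that they are mutually inverse up to the natural notion of isomorphism. Call them $\Phi$, sending a simply connected globally symmetric space (with chosen base point) to a symmetric triple, and $\Psi$, sending a symmetric triple to the pair $(G/H, eH)$.

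First, I would write $\Phi(M,g,p)$ explicitly by collecting the construction preceding Theorem \ref{thm:6}: set $\frakp = T_p M$, let $\frakh = \spn\{R(X,Y) : X,Y \in \frakp\} \subset \End(\frakp)$, put $\frakg = \frakh \oplus \frakp$ with the bracket given by the curvature, let $s = \id_\frakh \oplus (-\id_\frakp)$, and define $B = B_\frakh + B_\frakp$ as in \eqref{eq:13}. The axioms of Definition \ref{def:6} are then automatic: invariance of $B$ under $s$ is built in, the adjoint invariance on $\frakp$ follows from the curvature symmetries, on $\frakh$ it follows from \eqref{eq:13} and the Jacobi identity, and $\frakh = [\frakp, \frakp]$ holds by construction since $R(X,Y) = [X,Y]$ in $\frakg$.

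Next I would construct $\Psi(\frakg,s,B)$. Let $G$ be the simply connected Lie group integrating $\frakg$; since $G$ is simply connected, $s$ lifts uniquely to an involutive automorphism $S \in \Aut(G)$. Set $H = G^S$, which is trivially closed. The essential input from the theory of symmetric pairs \cite{MR556610} is that $H$ is connected with Lie algebra $\frakh$, since $G$ is simply connected. The quotient $M = G/H$ is then simply connected by the long exact homotopy sequence. Because $s|_\frakh = \id$ and $B$ is $s$-invariant, $B_\frakp$ is $\Ad(H)$-invariant and descends to a $G$-invariant pseudo-Riemannian metric on $M$. The map $gH \mapsto S(g)H$ is a well-defined isometric involution whose differential at $eH$ is $-\id_\frakp$, so $eH$ is an isolated fixed point; translating by $G$ produces a symmetry at every point, making $M$ globally symmetric.

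The final step is to verify that $\Phi$ and $\Psi$ are mutually inverse. For $\Phi \circ \Psi = \id$, one computes the curvature of the $G$-invariant metric on $G/H$ at $eH$ and obtains $R(X,Y)Z = -[[X,Y],Z]$; hence by Ambrose-Singer and the identity $\frakh = [\frakp, \frakp]$ the reconstructed holonomy algebra is precisely $\frakh$, and the reconstructed form on $\frakh$ agrees with $B_\frakh$ by \eqref{eq:13}. For $\Psi \circ \Phi = \id$ one must show that a simply connected globally symmetric space is determined, up to isometry, by its symmetric triple at one point; this is where I expect the main obstacle. I would invoke the Cartan-Ambrose-Hicks theorem (or a direct argument developing geodesics from the base point) to produce an isometry between the given space and the reconstructed $G/H$ out of the identification of tangent space, curvature, and metric at the base point. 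Simple connectedness is essential here, since it is what allows a locally defined isometry to be globalized and matches the simply connected target produced by $\Psi$.
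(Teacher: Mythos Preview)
Your approach matches the paper's: the paper sketches exactly the two constructions you call $\Phi$ and $\Psi$ in the paragraphs preceding the theorem, and then cites \cite{MR556610} for the statement that they give a bijection, without supplying the verification that $\Phi\circ\Psi=\id$ and $\Psi\circ\Phi=\id$. Your proposal is therefore more detailed than what the paper actually does---in particular, the paper never invokes Cartan--Ambrose--Hicks or otherwise argues $\Psi\circ\Phi=\id$---but it is consistent with the paper's outline and with the source it cites.
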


Given a weight system defined by a representation $$\rho \colon \frakg
\to \End(V)$$ of a metrized real Lie algebra $\frakg$, the
constructions above tell us how the weight tensor $\rho(C) \in \End(V)
\otimes \End(V)$ might be realized on a symmetric space. The point is
that the weight tensor knows about the holonomy Lie algebra. There must
exist some non-degenerate bilinear form $B_V$ on $V$ such that by
lowering the indices of $\rho(C)$, we obtain a tensor ${\check R^\rho}
\in (V^*)^{\otimes 4}$ with all the symmetries of a curvature
tensor. First of all, it must satisfy
\begin{align*}
  \check R^\rho_{cdab} = \check R^\rho_{abcd},
\end{align*}
which is automatic from the symmetry of $C \in
\frakg \otimes \frakg$. Secondly, it must satisfy
\begin{align}
  \label{eq:16}
  \check R^\rho_{abcd} = - \check R^\rho_{bacd},
\end{align}
which holds, for instance, if the representation $\rho$ is
skew-adjoint with respect to $B_V$. Finally, the tensor $\check
R^{\rho}$ must satisfy the algebraic Bianchi identity,
\begin{align}
  \label{eq:17}
   \check R^\rho_{abcd} + \check R^\rho_{bcad} + \check R^\rho_{cabd} = 0.
\end{align}
For convenience, we record the properties required of $\rho$ in the
following definition.

\begin{definition}
  \label{def:7}
  A representation $\rho \colon \frakg \to \End(V)$ of a metrized Lie
  algebra $\frakg$, with Casimir tensor $C \in \frakg \otimes
  \frakg$, on a vector space $V$ with a non-degenerate bilinear form
  $B$, is said to have \emph{curvature symmetries} if the tensor
  $\check R^{\rho} \in (V^*)^{\otimes 4}$ given by
  \begin{align*}
    \check R^{\rho}_{abcd} = \rho(C)^{xy}_{ac}B_{xb}B_{yd}
  \end{align*}
  satisfies the skew-symmetry in \eqref{eq:16} and the Bianchi identity
  in \eqref{eq:17}.
\end{definition}

These necessary symmetries are in fact sufficient to realize a Lie
algebra weight tensor $\rho(C)$ on a symmetric space. Indeed,
we can use the tensors $B_V$ and $\check R^{\rho}$, as well as the
corresponding $R^\rho \in (V^* \wedge V^*) \otimes \End(V)$, with a raised
index, to construct a Lie algebra $\frakh = R^{\rho}(V \wedge V)$ and
a symmetric triple $(\frakh \oplus V, s, B)$, exactly as we did using
the metric and curvature on a single tangent space of a symmetric
space. To prove that the bracket on $\frakh \oplus V$ satisfies the
Jacobi identity, both the Bianchi identity and the 4T relation are
needed, but the latter is automatically satisfied for the tensor
$\rho(C)$, as noted in \eqref{eq:15}. The Lie algebra $\frakh$ acts on
$(V,B_V)$ by skew-symmetric endomorphisms, and the weight tensor of
this representation reproduces $\rho(C)$. In other words, the weight
tensor must be realizable through an orthogonal representation and
this representation must satisfy the Bianchi identity in the sense of
\Fref{def:7}.

By applying \Fref{thm:7}, we
have proved the following.

\begin{theorem}
  \label{thm:8}
  The weight tensor of a real Lie algebra weight system can be
  realized on a symmetric space if and only if it comes from
  a representation with curvature symmetries.
\end{theorem}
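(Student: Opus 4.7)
The plan is to prove both directions separately. For necessity, suppose the weight tensor of $\rho \colon \frakg \to \End(V)$ arises as the weight tensor of some symmetric space $(M,g)$. By \Fref{thm:6}, this weight tensor equals $\rho_{\frakh}(C_{\frakh}) = \hat R$ where $\rho_{\frakh}$ is the adjoint action of the holonomy Lie algebra on a tangent space $\frakp \cong V$, equipped with the bilinear form $B_{\frakp}$ coming from the metric. Lowering indices via $B_{\frakp}$ recovers the Riemann curvature tensor $\check R$, which by construction is skew-symmetric in its first two indices and satisfies the algebraic Bianchi identity. Hence $\rho$ has curvature symmetries in the sense of \Fref{def:7}. (Strictly speaking, one may need to replace the given representation $\rho$ by the holonomy representation $\rho_\frakh$, but since both produce the same weight tensor and the latter has curvature symmetries, the statement holds as formulated.)

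For sufficiency, suppose $\rho \colon \frakg \to \End(V)$ has curvature symmetries, with bilinear form $B_V$ on $V$ and tensor $\check R^\rho$. The plan is to promote this linear data into a symmetric triple by mimicking the construction from locally symmetric spaces. Let $R^\rho \in (V^* \wedge V^*) \otimes \End(V)$ be obtained from $\check R^\rho$ by raising the fourth index via $B_V$, and define $\frakh = \spn\{R^\rho(X,Y) \mid X,Y \in V\} \subset \End(V)$. The 4T relation satisfied by $\rho(C)$, which was verified in \eqref{eq:15} for any Lie algebra weight tensor, translates after index manipulations into the identity
\begin{align*}
  [R^\rho(X,Y), R^\rho(Z,W)] = R^\rho(R^\rho(X,Y)Z,W) + R^\rho(Z,R^\rho(X,Y)W),
\end{align*}
so $\frakh$ is closed under commutators and forms a Lie subalgebra of $\End(V)$ acting on $V$.

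Next, form $\frakg' = \frakh \oplus V$ with the bracket defined on mixed and $V \times V$ pieces exactly as in the symmetric space construction, $[X,Y] = R^\rho(X,Y)$ for $X,Y \in V$ and $[A,X] = A(X)$ for $A \in \frakh$, $X \in V$. Skew-symmetry is automatic. The Jacobi identity reduces to three cases: three elements from $\frakh$ holds because $\frakh \subset \End(V)$ under commutator; two from $\frakh$ and one from $V$ holds by associativity of the action; two from $V$ and one from $\frakh$ reduces to the displayed 4T-derived identity; and three from $V$ reduces exactly to the algebraic Bianchi identity for $\check R^\rho$, which holds by assumption. Define the involution $s$ to be $+1$ on $\frakh$ and $-1$ on $V$, and extend $B_V$ by setting $B_\frakh(R^\rho(X,Y), R^\rho(Z,W)) = \check R^\rho(X,Y,Z,W)$, which is well-defined and non-degenerate by the argument given for $B_\frakh$ earlier in the paper. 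This yields a symmetric triple $(\frakg', s, B)$ in the sense of \Fref{def:6}.

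Applying \Fref{thm:7} produces a simply connected globally symmetric space $M = G/H$ whose holonomy Lie algebra is $\frakh$ and whose curvature, via the identification of $T_pM$ with $V$, is exactly $\check R^\rho$. By construction its weight tensor, via \Fref{thm:5}, coincides with $\rho(C)$ on the nose, completing the proof. The main obstacle I anticipate is verifying cleanly that $B_\frakh$ is well-defined on $\frakh$ — that is, independent of the presentation $A = R^\rho(X,Y)$ — and non-degenerate; both facts hinge on the skew-symmetry \eqref{eq:16} together with the 4T identity and essentially reprise the argument given in the paragraph containing \eqref{eq:13}, but need to be re-examined here without the a priori geometric input.
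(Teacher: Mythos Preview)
Your proposal is correct and follows essentially the same approach as the paper: for necessity you lower indices via the metric to recover the curvature tensor with its standard symmetries, and for sufficiency you rebuild the symmetric triple $(\frakh \oplus V, s, B)$ from $B_V$ and $R^\rho$, verify Jacobi via the 4T relation \eqref{eq:15} and the Bianchi identity, and then invoke \Fref{thm:7}. Your write-up is in fact more explicit than the paper's, which simply points back to the earlier construction of the symmetric triple from a tangent space; your flagged concern about well-definedness and non-degeneracy of $B_\frakh$ is handled exactly by reprising the argument around \eqref{eq:13}, as you suspected.
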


\begin{example}
  \label{exa:11}
  The weight tensor obtained from the standard representation of
  $\sl_2$ (\Fref{exa:3}) cannot be realized on a symmetric
  space. Indeed, by the discussions above, the corresponding holonomy
  algebra would by a subalgebra of the one-dimensional Lie algebra
  $\so_2$, which certainly does not realize the weight tensor
  \eqref{eq:14}.
\end{example}

\vspace{0.5cm}

\subsection*{Acknowledgements}

We would like to thank J{\o}rgen Ellegaard Andersen,
Daniel Tubbenhauer and Florian Sch\"atz for many helpful discussions.

\clearpage

\renewcommand{\bibname}{References}

\def\cprime{$'$} \def\cprime{$'$} \def\cprime{$'$}

\end{document}